\newtheorem{theorem} {Theorem} [section]
\newtheorem{proposition}[theorem]{Proposition}
\newtheorem{corollary} [theorem]{Corollary}
\theoremstyle{definition}
\begin{document}
\title{Normal bundles of cycles in flag domains}
\author{Jaehyun Hong, Alan Huckleberry and Aeryeong Seo}
\subjclass{14M15, 32M05, 57S20}
\keywords{Flag domains, Levi curvature}
\begin{abstract}
\noindent
A real semisimple Lie group $G_0$ embedded in its complexification $G$ has only finitely many orbits
in any $G$-flag manifold $Z=G/Q$.  The complex geometry of its open orbits $D$ (flag domains) is studied
from the point of view of compact complex submanifolds $C$ (cycles) which arise as orbits of certain distinguished subgroups.
Normal bundles $E$ of the cycles are analyzed in some detail.  It is shown that $E$ is trivial if and only if $D$ is holomorphically
convex, in fact a product of $C$ and a Hermitian symmetric space,  and otherwise $D$ is pseudoconcave.  
The proofs make use of basic results of Sommese and of Snow which are discussed in some detail.
\end{abstract}

\dedicatory{Dedicated to J. A. Wolf on the occasion of his 80${}^{th}$ 
birthday.\\ 
His fundamental work in the area of this paper has been a guiding light.}
\renewcommand{\subjclassname}{\textup{2010} Mathematics Subject Classification}
\maketitle
\renewcommand{\thefootnote}{}
\footnotetext{}
\maketitle

\maketitle
\section {Background and statement of results}
This note involves an interaction of ideas and methods from Lie theory and complex analysis.
Since one of our goals is to emphasize this interaction, we begin with a somewhat detailed
sketch of relevant background from both subjects. 
\subsection {Background on flag domains}
A \emph{flag manifold} of a (connected) complex semi-simple Lie group $G$ is a projective
algebraic $G$-homogeneous space $Z=G/Q$.  First examples are Grassmannians, classical
flag manifolds and quadrics.  If $G$ is defined by a (non-degenerate) bilinear form, such as
the complex symplectic group, then manifolds of maximally isotropic flags appear, e.g., the
Grassmannian of Lagrangian $n$-planes in $\mathbb{C}^{2n}$ equipped with its standard
symplectic structure.
A real form of $G$ is defined at the Lie algebra level as the fixed point algebra $\mathfrak{g}_0$
of an anti-linear Lie algebra involution $\tau :\mathfrak{g}\to \mathfrak{g}$.  Fixing $G$ as a simply-connected
linear algebraic group, we choose $G_0$ to be the subgroup of $G$ corresponding to the embedding
$\mathfrak{g}_0\hookrightarrow \mathfrak{g}$.\\

The action of $G_0$ on a $G$-flag manifold carries a wealth of geometric information.  The simplest
fact concerning this action is that $G_0$ has only finitely many orbits (\cite{W}, see \cite{FHW} for this and
other background).  In particular, $G_0$ 
has open orbits $D$ in $Z$ which are called \emph{flag domains}.  These are of particular importance
in $G_0$-representation theory and, for example, for geometric questions concerning moduli of algebraic
varieties (see, e.g., \cite{GGK}). All results in this note are stated under the assumption that $G_0$
is simple, i.e., that $\mathfrak{g}_0$ is a simple Lie algebra.  Since we are interested in flag domains, this
is no loss of generality, because $D$ splits as a product of manifolds corresponding to the splitting 
of $\mathfrak{g}_0$ as a sum of simple Lie algebras.\\

Maximal compact subgroups $K_0$ of $G_0$ are important for any study of a $G_0$-action.  In particular,
the Riemannian symmetric space associated to $G_0$ is the homogeneous space $M=G_0/K_0$. 
Three basic facts are: $1.$ $K_0$ is actually a maximal subgroup of $G_0$.  $2.$ Any two choices of $K_0$
are conjugate by an element of $G_0$.  $3.$ $G_0/K_0$ is a cell ($K_0$ is a strong deformation retract of
$G_0$).\\

At the level of flag domains a first basic result (proved by Wolf in his fundamental work \cite{W}) is that
there is exactly one $K_0$-orbit in $D$ (called the $K_0$-base cycle and denoted by $C_0$) which is 
a complex manifold.  It can also be characterized as the unique $K_0$-orbit of minimal dimension, and,
from properties of the Mostow-fibration, it follows that $D$ is a real vector bundle over $C_0$.  Of course
this is true for any cycle $C$ which is defined by a choice of $K_0$.  Note that all such choices are
parameterized by the symmetric space $M$.\\  

Now, given a choice of $K_0$, let $K$ denote the connected complex Lie group contained in $G$ which corresponds
to the embedding of $\mathfrak{k}_0+i\mathfrak{k}_0$ in $\mathfrak{g}$.  It is an algebraic (reductive) subgroup of
$G$ which, since $C_0$ is a complex manifold, stabilizes $C_0$.  If $g\in G$, then $C=g(C_0)$ is a flag manifold
of $gKg^{-1}$ which is embedded in $Z$, but for \emph{large} $g$ may not be in $D$.  The set of all cycles $C$ which
arise in this way and which are contained in $D$ is itself a complex manifold, its connected component containing
any base cycle $C_0$ being denoted by $\mathcal{M}_D$.  Numerous of Wolf's works have contributed to our understanding
of $\mathcal{M}_D$ (see \cite{FHW}).

\subsection {Tools from complex analysis}
The goal of this note is to prove several first results dealing with the role of a cycle $C$ in determining complex geometric
properties of $D$.  These are stated below in this introduction and then proved in the sequel. They do not depend on the
choice of the cycle in $\mathcal{M}_D$. Therefore we fix a choice of $K_0$, and its corresponding complexification $K$, thereby
fixing a base cycle $C_0$. Optimally one would hope that the smooth retraction of $D$ to $C_0$ could be realized in a way
that is compatible with the complex analytic properties of $D$.  For example, it would be desirable to have a $K_0$-invariant 
exhaustion $\rho :D\to \mathbb{R}^{\ge 0}$ which takes its minimum along $C_0$ and so that its Levi form $L(\rho):=\frac{i}{2}\partial \bar{\partial}\rho $
has signature properties which lead to vanishing or at least finite dimensionally of holomorphic cohomology.\\

If $q:=dim_{\mathbb{C}}C_0$, then, e.g., $H^q(D,\Omega^q)$ is infinite dimensional, where $\Omega^q$ is the sheaf of holomorphic 
$q$-forms.   If $D$ is measurable, i.e., if it possesses a $G_0$-invariant smooth measure, then Schmidt and Wolf
(\cite{SW}) have shown that a certain naturally defined exhaustion has the desired \emph{Levi convexity} properties so that 
$H^k(D,\mathcal{S})$ vanishes for every coherent sheaf $\mathcal{S}$ and for all $k>q$, this being a consequence of the vanishing theorem
of Andreotti and Grauert (see \cite{AG}).   For applications of classical type in several complex variables, it would be useful to at
least have finite dimensional cohomology for small $k$, e.g, for $k=0,1,2$.  The theorems of Andreotti and Grauert also apply 
in situations where  the exhaustions have \emph{Levi concavity} properties where $L(\rho)$ is required to have (everywhere or at least outside
of a compact subset) certain number of negative eigenvalues.  Thus, at least from the classical viewpoint, it would be desirable to
have a certain degree of \emph{pseudoconcavity}.\\

As a first step we study  the Levi curvature of the normal bundle $E$ of a cycle $C$.  It is known that by using results of Fritzsche (\cite{F}),
as we do below,  pseudoconcave neighborhood bases of the $0$-section of $E$ can be transferred to neighborhood bases of 
$C$ with the same degree of pseudoconcavity.  Following this strategy, and applying Andreotti's finiteness theorem (\cite{A}), if we could
realize this strategy, then we would have the finiteness of the dimension of $H^0(D,\mathcal{S})$ for any coherent sheaf $\mathcal{S}$.
However, this can not hold in general, because $D$ may possess non-constant holomorphic functions.  For example, Hermitian symmetric spaces
of non-compact type are flag domains and they are Stein manifolds!  More generally, if $D$ possesses non-constant holomorphic functions,  then
in a canonical fashion $D$ is a $G_0$-fiber bundle over a Hermitian symmetric space $D_{SS}$ whose fibers are just the cycles in $\mathcal{M}_D$.
In fact this bundle is trivial and consequently the initial flag domain is a product $D=D_{SS}\times C$ (see \cite{H1}).  

\subsection {Statement of results}
Given the above background it is a simple matter to state the results of this note.  For this we first re-emphasize that if $\mathcal{O}(D)\not\cong \mathbb{C}$,
then $D=D_{SS}\times C$ and in particular the normal bundle $E$ of $C$ is trivial.  In fact the converse holds (see Proposition \ref{classification}):
\begin {proposition} Let $C$ be any cycle in a flag domain $D$.  Then
the following are equivalent:
\begin {enumerate}
\item
The normal bundle $E$ of $C$ in $D$ is holomorphically trivial.
\item
The flag domain $D$ is a product $D=D_{SS}\times C$.
\end {enumerate}
\end {proposition}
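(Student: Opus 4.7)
The direction $(2)\Rightarrow(1)$ is immediate: in a product $D=D_{SS}\times C$, the cycle $\{x_0\}\times C$ has normal bundle canonically isomorphic to $C\times T_{x_0}D_{SS}$, which is holomorphically trivial. The plan for the converse $(1)\Rightarrow(2)$ is to produce a nonconstant holomorphic function on $D$; the classification from \cite{H1} recalled just before the proposition then yields the product decomposition.

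Since $C=K/H$ is a rational flag manifold of $K$, the normal bundle is the $K$-homogeneous bundle $E=K\times_H V$ associated to the $H$-representation $V=\mathfrak{g}/(\mathfrak{l}_p+\mathfrak{k})$ on a normal fiber. The first step is to apply the cohomology results for homogeneous vector bundles over flag manifolds (Snow, and the Bott--Borel--Weil machinery mentioned in the abstract) to characterize when $E$ is holomorphically trivial: one shows that triviality forces $V$ to extend to a trivial $K$-module, so that $H^0(C,E)\cong V\cong \mathbb{C}^r$ with $r=\mathrm{rk}\,E$, while $H^1(C,E)=0$. The second step interprets this cohomological data geometrically. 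The $K$-equivariant map $\mathfrak{g}\to H^0(C,E)$ sending $X\in\mathfrak{g}$ to the transverse component along $C$ of the induced vector field on $Z$ is surjective (by the dimension count from Step one and the fact that $G$ acts transitively on $Z$), and its kernel is the normalizer of $C$ in $\mathfrak{g}$. Exponentiating a complementary complex subalgebra $\mathfrak{a}\subset\mathfrak{g}$ yields a local holomorphic action biholomorphically sweeping $C$ through a neighborhood, which by Sommese's product-neighborhood criterion (applicable because $H^1(C,E)=0$) trivializes this neighborhood as $C\times U$ with $U\subset\mathbb{C}^r$ open.

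The main obstacle is globalization: passing from a product neighborhood of $C$ to a genuine nonconstant holomorphic function on all of $D$. I expect to use $G_0$-homogeneity together with Fritzsche's neighborhood-transfer results cited earlier in the introduction. The $G_0$-translates of $C$ cover $D$, and the $A$-action on the transverse coordinates is compatible with the $G_0$-action on $\mathcal{M}_D$; this compatibility should allow the local projections to patch into a well-defined $G_0$-equivariant holomorphic map $D\to D_{SS}$ whose fibers are the cycles in $\mathcal{M}_D$. In particular $\mathcal{O}(D)\not\cong\mathbb{C}$, whereupon \cite{H1} delivers the desired product $D=D_{SS}\times C$ with $D_{SS}$ Hermitian symmetric.
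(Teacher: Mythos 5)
Your direction $(2)\Rightarrow(1)$ is fine and agrees with the paper. For $(1)\Rightarrow(2)$, however, there is a genuine gap, and it sits exactly where you flag it: the globalization step. A holomorphically trivial normal bundle, together with $H^1(C,E)=0$, at best gives information about a neighborhood of $C$; to conclude $D=D_{SS}\times C$ you must produce a nonconstant holomorphic function on \emph{all} of $D$ (equivalently, the global equivariant fibration $D\to D_{SS}$), and nothing in your argument does this. The phrases ``I expect to use'' and ``should allow the local projections to patch'' are precisely the missing proof: projections defined near the various translates $gC$ have no a priori reason to agree on overlaps or to extend, and even the local statement is delicate --- the formal principle (trivial normal bundle plus vanishing $H^1$ gives a product neighborhood) is not automatic, and the ``product-neighborhood criterion'' you attribute to Sommese does not correspond to anything in the cited concavity papers. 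A smaller but real problem in your second step: a linear complement $\mathfrak{a}$ to the normalizer of $C$ in $\mathfrak{g}$ need not be a subalgebra, so ``exponentiating a complementary complex subalgebra'' presupposes structure (the normalizer being parabolic, with the opposite nilradical as complement) that is essentially equivalent to what you are trying to prove.

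The paper's route is global and group-theoretic from the start, which is how it avoids this issue. Triviality of $E$ forces $a(E)=\dim_{\mathbb{C}}C$, since the trivial bundle $E^*$ contains a compact subvariety of dimension $\dim_{\mathbb{C}}C$; Proposition \ref{classification} then applies Snow's root-theoretic formula to the isotropy module $E_0=\mathfrak{s}/((\mathfrak{q}+\theta\mathfrak{q})\cap\mathfrak{s})$. Maximal ampleness produces a weight $-\lambda$ with both $f_{-\lambda}$ and $\omega_0 f_{-\lambda}$ in $E_0^*$, which forces $E_0^*$ to contain all of $\mathfrak{s}^*$ (impossible, as $E_0^*$ is a proper submodule, in the non-Hermitian case) or one of $\mathfrak{s}_{\pm}^*$, whence $\mathfrak{q}\subset\mathfrak{k}+\mathfrak{s}_{\mp}$. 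That inclusion immediately yields a $G$-equivariant map from the compact ambient $Z$ onto a compact Hermitian symmetric space, restricting to the desired holomorphic fibration of $D$ over a noncompact Hermitian symmetric space, and \cite{H1} then gives the trivialization $D=D_{SS}\times C$. If you want to salvage your approach, the fibration must be produced by some such global argument; the local product neighborhood alone will not suffice.
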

Our approach involves use of Snow's root theoretic description of the ampleness $a(E)$ which
we outline in some detail below.   This is an invariant of any homogenous vector bundle over
a flag manifold, in our case a cycle, with $a(E)\le dim_{\mathbb{C}}C$.  Equality holds if
$E$ is trivial.  In our context we prove the following (also see Proposition \ref{classification}):
\begin {proposition}
If $C$ is any cycle in a flag domain $D$ and $E$ is its normal bundle, then $a(E)=dim_{\mathbb{C}}C$
if and only if $\mathcal{O}(D)\not\cong\mathbb{C}$ and $E$ is trivial.
\end {proposition}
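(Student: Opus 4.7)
The plan is to split the equivalence. The implication from the right-hand conditions to $a(E)=\dim_{\mathbb{C}}C$ is already recorded in the paragraph preceding the statement: once $E$ is holomorphically trivial, Snow's definition yields $a(E)=\dim_{\mathbb{C}}C$, and the extra hypothesis $\mathcal{O}(D)\not\cong\mathbb{C}$ plays no role in this direction, being carried along only because it is part of what the converse must produce.

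For the nontrivial direction, assume $a(E)=\dim_{\mathbb{C}}C$. I would first present the base cycle as $C=K/L$ with $L=K\cap Q$, realize the normal bundle as the $K$-homogeneous bundle $E=K\times_L V$ with $V=T_oZ/T_oC$ viewed as an $L$-module at the basepoint $o\in C$, and then feed this into Snow's root-theoretic description. That description expresses $a(E)$ as $\dim_{\mathbb{C}}C$ minus a nonnegative combinatorial defect which records the way in which the weights of $V$, with respect to a maximal torus of $L$, interact with the roots of $\mathfrak{k}$ outside $\mathfrak{l}$. The extremal hypothesis forces this defect to vanish, so every weight of $V$ is zero. Because every nontrivial irreducible representation of a reductive group has a nonzero highest weight, $L$ must act trivially on $V$, and hence $E$ is holomorphically trivial.

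Once triviality of $E$ is in hand, the preceding Proposition produces a splitting $D=D_{SS}\times C$. When $D_{SS}$ has positive dimension it is a Hermitian symmetric space of noncompact type, hence Stein, so $\mathcal{O}(D)\supset\mathcal{O}(D_{SS})\not\cong\mathbb{C}$, completing the proof. The only alternative, in which $D_{SS}$ reduces to a single point and $D=C$ is itself a compact flag manifold, is harmless: then the normal bundle has rank zero and the statement is vacuous.

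The main obstacle I anticipate is the conversion of extremality of $a(E)$ into triviality of the $L$-module $V$. Snow's formula is a sum of combinatorial contributions attached to weights of $V$ and to roots of $\mathfrak{k}$ outside $\mathfrak{l}$, and the saturation of the inequality $a(E)\le\dim_{\mathbb{C}}C$ has to be unpacked carefully enough to force every one of those contributions to collapse. The specific structure of the $L$-module $V=T_oZ/T_oC$, whose weights are prescribed by roots of $\mathfrak{g}$ that are not roots of $\mathfrak{k}$, is where I expect the requisite rigidity to come from, as a general homogeneous bundle need not be trivial even when its Snow-ampleness is maximal.
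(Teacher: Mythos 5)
Your converse argument hinges on the claim that saturation of Snow's bound forces every weight of the isotropy module $V=E_0$ to vanish, so that $L$ acts trivially on $V$; this step is false, and it fails in exactly the cases the proposition is about. When $D=D_{SS}\times C$ with both factors of positive dimension (which occurs, e.g., for suitable flag domains of $Sp(2n,\mathbb{R})$), the fiber $E_0$ is isomorphic as an $L$-module to one of the irreducible $K$-summands $\mathfrak{s}_{\pm}$ of $\mathfrak{s}$, whose torus weights are noncompact roots and hence nonzero; the bundle $E=K\times_L E_0$ is nonetheless holomorphically trivial because the $L$-action on $E_0$ extends to a $K$-action (one has $K\times_L(\mathrm{Res}^K_L W)\cong K/L\times W$ for any $K$-module $W$). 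So triviality of $E$ cannot be deduced from, and is not equivalent to, triviality of the isotropy representation; and if your weight-vanishing claim were correct it would ``prove'' that no flag domain is a nontrivial product $D_{SS}\times C$, which is wrong. Correspondingly, $a(E)=\dim_{\mathbb{C}}C$ does not collapse all weights of $E_0$: what it actually yields, in the Weyl-group form of Snow's theorem, is that the longest element $\omega_0$ lies in $W_0$, i.e., there is a highest weight $\lambda\in\Lambda_{\max}(E_0)$ with $-\lambda\in\Lambda((E_0^*)^U)$ and $-\omega_0\lambda\in\Lambda(E_0^*)$. Since $-\omega_0\lambda$ is the highest weight of the irreducible constituent of $\mathfrak{s}^*$ whose lowest weight is $-\lambda$, this forces $E_0^*$ to contain an entire irreducible $K$-submodule of $\mathfrak{s}^*$.

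That containment is the engine of the paper's proof, and it is what your argument is missing. If $\mathfrak{g}_0$ is not of Hermitian type, $\mathfrak{s}$ is $K$-irreducible, so $E_0^*=\mathfrak{s}^*$, contradicting that $E_0$ is a proper quotient of $\mathfrak{s}$; hence $a(E)=\dim_{\mathbb{C}}C$ cannot occur there at all. If $\mathfrak{g}_0$ is of Hermitian type, one gets $E_0^*\supseteq\mathfrak{s}_{\pm}^*$, whence $\mathfrak{q}\cap\mathfrak{s}\subseteq\mathfrak{s}_{\mp}$ and $\mathfrak{q}\subseteq\mathfrak{k}+\mathfrak{s}_{\mp}$, which produces the $G$-equivariant fibration $Z\to\widehat{Z}$ onto a compact Hermitian symmetric space restricting to $D\to D_{SS}$; triviality of $E$ and $\mathcal{O}(D)\not\cong\mathbb{C}$ then follow from the resulting product structure, much as in your closing paragraph. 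A secondary caution: you invoke ``the preceding Proposition'' to pass from triviality of $E$ to the splitting $D=D_{SS}\times C$, but in the paper both introductory propositions are proved by the single argument just described, so that step cannot be treated as an independent black box without risking circularity. Your overall architecture (easy direction from triviality, hard direction via Snow) matches the paper's, but the mechanism by which maximal ampleness is exploited must be replaced by the highest-weight/lowest-weight argument above.
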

Thus we must study the situation where $a(E)<dim_{\mathbb{C}}C$.  But this is exactly the setting where
basic results of Sommese (\cite {So1,So2}) apply.  We describe these in some detail below.  The main
point is that the dual bundle $E^*$ has a \emph{norm-like} function whose Levi form has a degree of positivity
which can be estimated by $a(E)$.  Then $E$ has the corresponding negativity and, applying Fritzsche's theorem
as indicated above, we can transfer this to a neighborhood basis of the cycle.  The end result can be stated 
as follows (see Corollary \ref{neighborhoods}):
\begin {proposition}
There is a neighborhood $U$ of $C$ in $D$ and an exhaustion $\rho :U \to \mathbb{R}^{\ge 0}$ with
minimum along $C$ so that outside of $C$ the Levi form $L(\rho)$ has at least $dim_{\mathbb{C}}C-a(E)$
negative eigenvalues.
\end {proposition}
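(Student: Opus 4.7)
The plan is to construct the exhaustion in two stages: first produce it on a tubular neighborhood of the zero section inside the total space of the normal bundle $E$, then transport it to a neighborhood of $C$ in $D$ using Fritzsche's theorem.

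For the first stage, I would apply Sommese's construction, already recalled in the main text, to the dual bundle $E^*$. This supplies a nonnegative norm-like function $\sigma$ on $E^*$ that is smooth and strictly positive off the zero section, and whose Levi form at every point of $E^* \setminus 0_{E^*}$ is positive definite on a complex subspace of codimension at most $a(E)$ in $T(E^*)$. Dualizing fibrewise — for instance, identifying $E$ with $E^{**}$ and defining $\tilde\rho := 1/\sigma$ on the complement of the zero section, then extending by zero along the zero section — yields a smooth nonnegative function on a neighborhood of the zero section in $E$. Under this sign inversion the positive Levi eigenvalues of $\sigma$ translate into negative Levi eigenvalues of $\tilde\rho$, and in particular at least $\dim_{\mathbb{C}} C - a(E)$ of these survive along the base directions tangent to $C$.

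For the second stage, I would invoke Fritzsche's theorem, which provides a diffeomorphism between a small tubular neighborhood $U$ of $C$ in $D$ and a small disc subbundle $W \subset E$ that restricts to the identity on $C$ and whose failure to be holomorphic vanishes along $C$ to sufficiently high order that Levi-form signatures on $W\setminus 0_E$ are preserved when pulled back to $U \setminus C$. Pulling $\tilde\rho$ back through this diffeomorphism and composing with an appropriate boundary-blow-up $f:[0,c_0)\to \mathbb{R}^{\geq 0}$ delivers the desired exhaustion $\rho : U \to \mathbb{R}^{\geq 0}$, taking its minimum value $0$ precisely along $C$.

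The main obstacle is the eigenvalue bookkeeping. Sommese's positivity is phrased on the whole tangent bundle of $E^*$ and must be disentangled into fibre versus base contributions to confirm that the claimed count $\dim_{\mathbb{C}} C - a(E)$ of negative eigenvalues actually appears along $C$ after dualizing. The boundary-blow-up must also be chosen with controlled second derivative, since
\[
\partial\bar{\partial}(f \circ \tilde\rho) \;=\; f'(\tilde\rho)\,\partial\bar{\partial}\tilde\rho \;+\; f''(\tilde\rho)\,\partial\tilde\rho \wedge \bar{\partial}\tilde\rho,
\]
and the semipositive second term, if $f''$ is too large, could erode the strict negative eigenvalues produced in the first stage. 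Once these two points are handled, Fritzsche's theorem delivers the transfer automatically, so the core work is concentrated in the passage from $\sigma$ on $E^*$ to $\tilde\rho$ on $E$.
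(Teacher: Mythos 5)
Your skeleton is the same as the paper's --- produce a norm-like function with controlled Levi signature on the bundle side, then transfer it to a neighborhood of $C$ in $D$ via Fritzsche --- but the step you yourself flag as ``the core work,'' the passage from $\sigma$ on $E^*$ to $\tilde\rho$ on $E$, is exactly where the argument breaks, and it is not mere bookkeeping. Setting $\tilde\rho := 1/\sigma$ does not define a function on $E$ at all: $\sigma$ lives on the total space of $E^*$, and for $r=\mathrm{rank}\,E>1$ (the typical case for the normal bundle of a cycle) there is no fibrewise identification of $E^*\setminus 0$ with $E\setminus 0$ along which a reciprocal could be transported. Even in the rank-one case, where such an identification does exist, one has $\partial\bar\partial(1/\sigma)=-\sigma^{-2}\,\partial\bar\partial\sigma+2\sigma^{-3}\,\partial\sigma\wedge\bar\partial\sigma$, so the semipositive rank-one correction you worry about in your last paragraph is already present in the dualization itself and can destroy one of the negative eigenvalues; ``sign inversion of the Levi eigenvalues'' is not automatic. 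The missing ingredient is Sommese's duality theorem (Proposition 1.3 of \cite{So1}, recalled as Theorem \ref{Sommese} in the paper): a norm-like function on $E^*$ whose Levi form has at most $k$ non-positive eigenvalues induces an \emph{associated} norm-like function on $E$ (essentially a smoothed dual norm $v\mapsto\max_{f\neq 0}|f(v)|^2/\sigma(f)$, not a reciprocal) which is \emph{strongly} $(k+1+r)$-concave. That statement both constructs the correct function on $E$ and performs the fibre-versus-base eigenvalue count for you: at most $r+a(E)$ non-negative eigenvalues out of $\dim_{\mathbb C}E = r+\dim_{\mathbb C}C$, hence at least $\dim_{\mathbb C}C-a(E)$ negative ones.

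The second stage also has its hypotheses slightly wrong. Fritzsche's theorem, in the form used here (\cite{F}), requires the normal bundle to be \emph{strongly} $(r+k+1)$-concave, i.e.\ the real Hessian of the norm-like function must be positive definite on each punctured fibre; this is precisely what Sommese's theorem supplies and what an ad hoc reciprocal would not. Once that hypothesis is in place, Fritzsche's theorem directly yields the $(r+k+1)$-concavity of $D$ near $C$, including the exhaustion $\rho$ with minimum along $C$ and the stated eigenvalue count on the sublevel boundaries; you need neither to re-derive the almost-holomorphic tubular diffeomorphism nor to manage the boundary blow-up $f$ separately. So the repair is to replace your dualization by an appeal to Theorem \ref{Sommese} and then quote Fritzsche's theorem as stated, which is exactly the route the paper takes in Corollary \ref{concavity} and Corollary \ref{neighborhoods}.
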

Andreotti's finiteness theorem for $H^0(D,\mathcal{S})$ mentioned above only requires the existence of a relatively compact domain 
$U_c=\{\rho < c\}$ with $L(\rho)$ having at least one negative eigenvalue at each boundary point of $U_c$; in fact much less is required.  In this case we say that 
$D$ is pseudoconcave in the sense of Andreotti.  Thus we have the following remark:
\begin {corollary}
Either $D$ is a product $D=D_{SS}\times C$ of a Hermitian symmetric space and any cycle $C$ contained in $D$ or
$D$ is pseudoconcave in the sense of Andreotti.  In particular, for every coherent sheaf $\mathcal{S}$ the space of sections
$H^0(D,\mathcal{S})$ is finite dimensional.
\end {corollary}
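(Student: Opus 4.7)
The plan is a dichotomy argument driven by the normal bundle $E$ of a chosen cycle $C\subset D$. If $E$ is holomorphically trivial, the first Proposition above immediately yields $D=D_{SS}\times C$, placing us in the first alternative. Otherwise, since triviality of $E$ forces $a(E)=\dim_{\mathbb{C}}C$ (the \emph{equality holds if $E$ is trivial} remark from the discussion of Snow's invariant), non-triviality gives the strict inequality $a(E)<\dim_{\mathbb{C}}C$. Thus I need only treat this second case and exhibit Andreotti pseudoconcavity.

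In this second case, the third Proposition supplies a neighborhood $U$ of $C$ in $D$ and an exhaustion $\rho\colon U\to\mathbb{R}^{\ge 0}$ attaining its minimum along $C$ with the property that, off $C$, the Levi form $L(\rho)$ has at least $\dim_{\mathbb{C}}C-a(E)\ge 1$ negative eigenvalues. Using compactness of $C$ and the fact that $\rho$ is an exhaustion of $U$ with minimum along $C$, I would pick $c>0$ so small that the sublevel set $U_c=\{\rho<c\}$ is relatively compact in $D$ and its boundary $\partial U_c\subset\{\rho=c\}$ is disjoint from $C$. Each boundary point therefore sits in the region where $L(\rho)$ carries at least one negative eigenvalue, so $U_c\Subset D$ is strictly pseudoconcave along $\partial U_c$ in the sense required by Andreotti.

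Finally, Andreotti's finiteness theorem \cite{A} asserts that if a connected complex manifold $X$ contains such a relatively compact open subset, then $H^0(X,\mathcal{S})$ is finite dimensional for every coherent sheaf $\mathcal{S}$ on $X$. Applying this with $X=D$ and the bumping constructed above yields the \emph{in particular} clause of the corollary in the second alternative, completing the proof.

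The one point that will need care is confirming that the Levi concavity output of the third Proposition, which is phrased in terms of the auxiliary exhaustion $\rho$ on the neighborhood $U$, really matches the hypothesis of Andreotti's theorem applied to the \textbf{ambient} flag domain $D$. Once one observes that $U_c\Subset D$ and $\partial U_c\subset U\setminus C$, this verification is routine, but it is precisely where the Fritzsche transfer \cite{F} used to produce $\rho$ in the first place earns its keep — it guarantees that the pseudoconcavity on the linear model $E$ passes to an honest neighborhood of $C$ in $D$, which is what lets Andreotti's global theorem be invoked at the end.
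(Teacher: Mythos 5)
Your overall architecture matches the paper's: the corollary is meant to follow from the three propositions of the introduction plus the remark about Andreotti's theorem, and your handling of the second alternative (shrinking to a relatively compact sublevel set $U_c\Subset D$ whose boundary avoids $C$, observing that every boundary point lies where $L(\rho)$ has at least $\dim_{\mathbb{C}}C-a(E)\ge 1$ negative eigenvalues, then invoking Andreotti's finiteness theorem for $H^0$) is exactly what the paper intends. The problem is the bridge between the two alternatives. You write that since triviality of $E$ forces $a(E)=\dim_{\mathbb{C}}C$, non-triviality gives $a(E)<\dim_{\mathbb{C}}C$. That inference is invalid: from ``$E$ trivial $\Rightarrow a(E)=\dim_{\mathbb{C}}C$'' one may only conclude ``$a(E)<\dim_{\mathbb{C}}C\Rightarrow E$ non-trivial,'' which is the wrong direction. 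A priori a non-trivial $E$ could still satisfy $a(E)=\dim_{\mathbb{C}}C$ (the conormal bundle $E^*$ could contain a compact subvariety of maximal dimension for some other reason), and in that case the third Proposition gives you nothing and your argument stalls.

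What actually closes this gap is the second Proposition, i.e.\ Proposition \ref{classification}, whose substantive direction --- proved via Snow's theorem and the analysis of the $K$-module $\mathfrak{s}$ --- says that $a(E)=\dim_{\mathbb{C}}C$ forces $E$ to be trivial and $D$ to fiber over a Hermitian symmetric space. Its contrapositive is precisely the implication you need. The cleanest repair, and the paper's own organization, is to run the dichotomy on $a(E)$ rather than on triviality of $E$: either $a(E)=\dim_{\mathbb{C}}C$, in which case Proposition \ref{classification} puts you directly in the product case, or $a(E)<\dim_{\mathbb{C}}C$, in which case Corollary \ref{neighborhoods} and your Andreotti argument apply verbatim. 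With that substitution your proof is correct and coincides with the paper's.
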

It should be emphasized that, while the  Andreotti-Grauert finiteness/vanishing theorems can be applied to the domain $U$, they
don't directly give results for $D$.

\subsection {Comparison to other results}
In (\cite {H1}) the cycle connectedness equivalence relation was introduced: Two points are equivalent if they are contained in a connected chain of
cycles $D$.  It was shown that $D$ is cycle connected if and only if it is not a product $D=D_{SS}\times C$.  In the cycle connected case the hope
was to fill out a neighborhood $U$ of a given cycle with such chains and to prove the pseudoconcavity of $U$ using supporting cycles at its boundary.
This was done in the $1$-connected case where only chains of length one are needed.  Note that in this case the normal bundle of a cycle would certainly
seem to have a degree of ampleness.  Recently connectedness properties have been further studied in concrete settings (\cite {L, Ha}), and it appears
that $1$-connectedness is not unusual.  In (\cite{K}), in a somewhat more general setting, finite dimensionality of $H^0(D,\mathcal{S})$ was proved, e.g.,
when $D$ is not a product as above. It is very interesting that the germ of the proof can be traced back to Siegel's Schwarz-Lemma method which 
was used by Andreotti to prove finite dimensionality under weak pseudoconcavity assumptions.\\

Most recently in ([HHL]) a method was introduced (completely different from that introduced here) which produces an exhaustion $\rho :D\to \mathbb{R}^{\ge 0}$
where the pseudoconcavity of the sublevel sets $\{\rho <c\}$ is displayed by supporting cycles at the boundaries $\{\rho =c\}$, of course assuming $D$ is
not a product as above.  In important cases the degree of pseudoconcavity can be computed in terms of root-theoretic invariants.
Although $\rho $ is continuous, it is at best piecewise smooth and therefore a direct application of the theorems of Andreotti and Grauert is not possible.\\

Much of the above has been carried out in an attempt to avoid explicit computations involving the normal bundle $E$ of $C$.   Given the approach proposed
here, the only missing ingredient in the general case is the computation of Snow's invariant which will in turn yield the desired formula for $a(E)$ and
the degree of pseudoconcavity of the corresponding neighborhood basis of $C$.   The basic tools for this computation should be available in Part 4 of (\cite{FHW}).
Nevertheless, root theoretic computations of this type can be quite involved.

\section {Spanned and ampleness}
Before going further, let us clarify the several notions that are relevant for our considerations that follow.   First, a holomorphic vector bundle
$E\to X$ is said to be \emph{spanned} if for every $x\in X$ there are global sections $s_1,\ldots ,s_r\in \Gamma (X,E)$ so
that $(s_1(x),\ldots ,s_r(x))$ is a basis of the fiber $E_x$.  If $V=\Gamma(X,E)$ and the \emph{evaluation map}
$\nu :E^*\to V^*$ is defined by $\nu (f_x)(s)=f_x(s(x))$, then $E$ is spanned if and only if $\nu \vert E_x^*$ is a linear
isomorphism for all $x\in X$.  In our context the relevant bundle $E$ is spanned:
\begin {proposition}
The normal bundle $E$ of the cycle $C$ is spanned.
\end {proposition}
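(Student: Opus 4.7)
The plan is to realize the normal bundle as a quotient of the restriction of the ambient tangent bundle, whose global generation is an immediate consequence of the fact that the $G$-action on $Z$ is transitive.

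Since $D$ is open in $Z=G/Q$, the normal bundle of $C$ in $D$ coincides with the normal bundle of $C$ in $Z$, and we have the short exact sequence of holomorphic vector bundles on $C$
\begin{equation*}
0 \longrightarrow T_C \longrightarrow T_Z|_C \longrightarrow E \longrightarrow 0.
\end{equation*}
If $T_Z|_C$ is spanned, then the surjection $T_Z|_C \twoheadrightarrow E$ of vector bundles, applied pointwise to such a family of sections, yields a family of global sections of $E$ whose values span each fiber $E_x$. Hence it is enough to exhibit global sections of $T_Z|_C$ that span at every point of $C$.

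For each $X\in\mathfrak{g}$, differentiating the action of the one-parameter subgroup $\exp(tX)$ at $t=0$ produces a global holomorphic vector field $\xi_X$ on $Z$. Because $G$ acts transitively on $Z$, the evaluation map $\mathfrak{g}\to T_zZ$, $X\mapsto \xi_X(z)$, is surjective at every $z\in Z$, in particular at every point of $C$. Restricting the family $\{\xi_X : X\in\mathfrak{g}\}$ to $C$ therefore gives a finite-dimensional collection of global sections of $T_Z|_C$ whose values span the fiber at each point of $C$, and projecting them to $E$ via the quotient map completes the argument.

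I do not anticipate any serious obstacle. The only mild subtlety is the passage from spanning of $T_Z|_C$ to spanning of the quotient $E$, since the induced map on global sections need not be surjective; but this is irrelevant here, because we are only pushing forward a fixed spanning family fiberwise along a surjective bundle map, and spanning is plainly preserved under such a pushforward. It is worth remarking that the $K$-homogeneity of $E$ alone would not suffice for the conclusion: it is the global nature of the $G$-action on the ambient $Z$, which does not preserve $C$, that supplies the sections needed to span $E$ at a single point, after which $K$-homogeneity of $C$ propagates spanning to every other point.
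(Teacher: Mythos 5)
Your proof is correct and is essentially the paper's own argument: the paper likewise observes that $G$-homogeneity of $Z$ supplies vector fields $\hat{\xi}_1,\ldots,\hat{\xi}_r$ whose images in $T_pZ/T_pC$ span the normal space at each point, i.e.\ sections of $T_Z|_C$ pushed forward along the quotient to $E$. (Your closing remark about needing $K$-homogeneity to propagate spanning from one point is unnecessary, as your own argument already spans at every point of $C$ directly, but this does not affect the proof.)
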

\begin {proof}
Since $Z$ is $G$-homogeneous, given
$p\in C$ there exist $\xi_1,\ldots ,\xi_r\in \mathfrak{g}$ with associated holomorphic vector
fields $\hat{\xi}_1,\ldots ,\hat{\xi}_r$ so that the images of $\hat{\xi}_1(p),\ldots , \hat{\xi}_{r}(p)$
in $T_pZ/T_pC$ form a basis of that space.
\end {proof}
The notion of $k$-\emph{ample} is defined as follows (see \cite{So1}).  For this we assume that
$X$ is a compact complex manifold which is always the case in our applications.  Let $\mathbb P(E^*)$ be the quotient of $E^*\setminus X$
by the standard $\mathbb{C}^*$-action defined by $v\mapsto \lambda v$ and let $\zeta^*$ be the tautological bundle
of $1$-dimensional subspaces of $E^*$ regarded as a bundle over $\mathbb P(E^*)$.  Assume that $\zeta^d$ is spanned. If for some such
$d$ the fibers of the evaluation map $\nu_d: (\zeta^d)^*\to \Gamma (\mathbb {P}(E^*),\zeta^d)^*$ are at most $k$-dimensional, then one
says that $E$ is $k$-ample.  The \emph{ampleness} $a(E)$ of $E$ is defined to be the minimum of the $k$ so that $E$ is $k$-ample.\\

If $E$ is spanned, a much simpler definition is possible. This is related to the fact that $E$ is spanned if and only if 
$\pi \times \nu :E^*\to X\times V^*$ is a bundle embedding of $E^*$ as a closed submanifold of the trivial bundle.  Since the
product $X\times V^*$ is holomorphically convex, with Remmert reduction given by its projection onto $V^*$, the Remmert
reduction of $R:E^*\to E^*/{\sim }$ is such that the map $E^*/{\sim} \to Im(\nu)$ defined by the universality property of the Remmert reduction
 is a finite map.  In particular the connected components of the fibers of $\nu $ are just the connected components of the fibers of the Remmert reduction.  Thus, given
$v\in E^*$, the connected component of $\nu $-fiber $\nu^{-1}(\nu (v))$ is simply the maximal,  connected compact analytic set 
containing $v$.\\

On the other hand $E^*$ is naturally identified with  the bundle space $\zeta^*$ with its $0$-section blown down to the $0$-section
$X$ of $E^*$.  Thus, if $\zeta^d$ is spanned, then the above argument applied to $\nu_d$ implies that the connected components
of its fibers are maximal connected compact analytic subsets of the bundle space $(\zeta^d)^*$.  Since $\zeta ^*\to (\zeta^d)^*$ is
a finite map,  the following is immediate.
\begin {proposition}
The ampleness $a(E)$ of a spanned holomorphic vector bundle over a compact complex manifold, is the maximal dimension
of a compact complex subvariety of $E^*$.
\end {proposition}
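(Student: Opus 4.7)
The plan is to reapply the Remmert-reduction argument the excerpt has just given for $E$, now to the spanned line bundle $\zeta^d$ on the compact manifold $\mathbb{P}(E^*)$, and then to transport the resulting equality down to $E^*$ through the blow-down $\zeta^*\to E^*$.

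First I fix $d$ with $\zeta^d$ spanned. The paragraph preceding the statement applies verbatim with $E$ replaced by the line bundle $\zeta^d$ on the compact base $\mathbb{P}(E^*)$: the map $\pi\times\nu_d$ embeds $(\zeta^d)^*$ as a closed submanifold of the holomorphically convex trivial bundle $\mathbb{P}(E^*)\times\Gamma(\mathbb{P}(E^*),\zeta^d)^*$, whose Remmert reduction is projection onto the second factor. Hence the connected components of the $\nu_d$-fibers are the maximal connected compact analytic subsets of $(\zeta^d)^*$, so the maximum dimension of a compact analytic subvariety of $(\zeta^d)^*$ equals the maximum fiber dimension of $\nu_d$. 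Minimizing over admissible $d$ identifies this number with $a(E)$.

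Next I use that the $d$-th power map $\zeta^*\to(\zeta^d)^*$, which on fibers is $v\mapsto v^{\otimes d}$, is a finite holomorphic map of line bundles. Finite maps preserve dimensions of compact analytic subvarieties under both image and preimage, so the maximum dimension of a compact analytic subvariety of $\zeta^*$ agrees with that of $(\zeta^d)^*$ and is in particular independent of $d$. Finally, $\zeta^*$ is by construction the blow-up of $E^*$ along its zero-section $X$, with exceptional divisor equal to the zero-section $\mathbb{P}(E^*)\subset\zeta^*$; off these zero-sections the blow-down restricts to a biholomorphism $\zeta^*\setminus\mathbb{P}(E^*)\simeq E^*\setminus X$. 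Strict transforms and direct images then furnish a dimension-preserving bijection between compact analytic subvarieties of $E^*$ and of $\zeta^*$ not contained in the respective zero-sections, and together with the previous step this gives $a(E)=\max\{\dim Y:Y\subset E^*\text{ compact analytic}\}$.

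The main point calling for care is this final matching. Because the exceptional divisor $\mathbb{P}(E^*)$ has dimension $\dim X + \mathrm{rank}(E) - 1$ while the zero-section $X\subset E^*$ has only dimension $\dim X$, I must check that the zero-fiber of $\nu_d$ is exactly $\mathbb{P}(E^*)$ and that the maximum compact analytic dimension on either side of the blow-down is realized by a subvariety meeting $\zeta^*\setminus\mathbb{P}(E^*)$, so that the dimension comparison across the blow-down is honest. Once this is verified, the chain of equalities yields the proposition.
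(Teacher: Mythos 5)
Your route is the same as the paper's (the Remmert reduction of $E^*\subset X\times V^*$ identifying maximal connected compact analytic sets with components of evaluation fibers, the finite power map $\zeta^*\to(\zeta^d)^*$, and the blow-down $\zeta^*\to E^*$), but the one step you defer is exactly the one that cannot be carried out as you formulate it. You propose to check that ``the maximum compact analytic dimension on either side of the blow-down is realized by a subvariety meeting $\zeta^*\setminus\mathbb{P}(E^*)$.'' This is false in general: on the $\zeta^*$ (hence $(\zeta^d)^*$) side the zero section is itself a compact subvariety of dimension $\dim_{\mathbb{C}}X+r-1$, while every connected compact subvariety not contained in it is in fact disjoint from it, lies in a $\nu_d$-fiber over a nonzero point, and injects into $X$ under the bundle projection, so has dimension at most $\dim_{\mathbb{C}}X$; for $r\ge 2$ the maximum is therefore attained only on the zero section. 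Likewise on the $E^*$ side the zero section has dimension $\dim_{\mathbb{C}}X$, whereas the components of the nonzero $\nu$-fibers have dimension $a(E)$, which is strictly smaller in precisely the cases the paper cares about. So the unqualified chain of equalities, zero fibers included, returns $\dim_{\mathbb{C}}X+r-1$ on one side and $\dim_{\mathbb{C}}X$ on the other, not $a(E)$.

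The repair is not to verify your deferred claim but to excise the zero sections throughout: Sommese's $k$-ampleness refers to the fibers of the induced map $\mathbb{P}(E^*)\to\mathbb{P}\bigl(\Gamma(\mathbb{P}(E^*),\zeta^d)^*\bigr)$, i.e., to the $\nu_d$-fibers over \emph{nonzero} points, and the proposition must be read as the maximal dimension of a compact subvariety of $E^*$ not contained in the zero section. With that reading your argument closes cleanly: any connected compact subvariety of $E^*$ is $\nu$-constant (because $V^*$ is affine), so it either lies in $X$ or is disjoint from $X$; the components of the nonzero $\nu$-fibers correspond, via the biholomorphism $E^*\setminus X\simeq\zeta^*\setminus\mathbb{P}(E^*)$ and the finite power map, to the components of the nonzero $\nu_d$-fibers, whose common maximal dimension is $a(E)$. (In fairness, the paper states the proposition and runs its one-line proof with the same elision; but your explicit plan to prove that the maximum is attained off the zero sections is the point at which the write-up, as given, breaks.)
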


\begin {corollary}\label{spanned}
The ampleness $a(E)$ of a spanned holomorphic vector bundle $E$ over a compact complex manifold $X$ is the maximum
of the dimensions of the $\nu $-fibers.
\end {corollary}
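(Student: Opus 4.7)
The plan is to reduce the statement to the preceding proposition by showing that the compact complex subvarieties of $E^*$ are precisely the connected components of $\nu$-fibers, and that the dimension of a $\nu$-fiber is the maximum of the dimensions of its components.

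The preparatory discussion just above the preceding proposition already supplies the essential tools: spannedness gives the closed embedding $\pi \times \nu : E^* \hookrightarrow X \times V^*$, and the Remmert reduction $R : E^* \to E^*/{\sim}$ factors through $\nu$ via a finite map $E^*/{\sim} \to \nu(E^*) \subset V^*$. From this I would extract two observations. First, every connected component of a fiber $\nu^{-1}(w)$ coincides with a fiber of $R$, so it is a connected compact analytic subset of $E^*$. Second, if $Y \subset E^*$ is any connected compact analytic subvariety, then $\nu(Y)$ is a compact analytic subset of the vector space $V^*$; any such set is finite, and by connectedness it is a single point. Hence $Y$ lies in a single fiber $\nu^{-1}(w)$, and by connectedness it lies in a single connected component of that fiber.

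Combining these two observations, the family of (maximal) connected compact complex subvarieties of $E^*$ is exactly the family of connected components of $\nu$-fibers. The previous proposition identifies $a(E)$ with the supremum of dimensions of such subvarieties. On the other hand, since $E^*/{\sim} \to \nu(E^*)$ is a finite map, each fiber $\nu^{-1}(w)$ is the disjoint union of finitely many connected components, and the complex dimension of $\nu^{-1}(w)$ equals the maximum of the dimensions of these components. Taking the supremum over $w \in \nu(E^*)$ therefore yields
\[
a(E) \;=\; \max_{w} \dim \nu^{-1}(w),
\]
which is the assertion of the corollary.

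The only real point requiring attention is the finiteness of the number of connected components of a single $\nu$-fiber, which is needed to pass from ``max dimension over connected compact subvarieties'' to ``max dimension of $\nu$-fibers''; this is immediate from the finiteness of the map $E^*/{\sim} \to \nu(E^*)$ established in the discussion preceding the previous proposition. No further obstacle arises.
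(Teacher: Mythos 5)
Your proposal is correct and follows essentially the same route as the paper: the paper's proof is a one-line appeal to the preceding discussion (Remmert reduction of $E^*$ embedded in $X\times V^*$, identifying maximal connected compact analytic subsets of $E^*$ with connected components of $\nu$-fibers), and you have simply spelled out the details of that discussion, including the correct observation that a connected compact analytic subset of the vector space $V^*$ is a point.
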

\begin {proof}
As was shown above, the maximal connected compact analytic subsets in $E^*$ are just the connected components of the
$\nu $-fibers.
\end {proof}

\section {Ampleness of vector bundles}
 We apply basic results of Sommese (\cite{So1,So2}) which we now
 describe for any spanned holomorphic vector bundle $E$ over a compact complex manifold $X$.
\subsection {Levi geometry of bundle spaces}
Sommese's smoothing lemma (Lemma 1.1 of \cite{So2}) 
implies the first basic result (Corollary 1.3 of \cite{So2}):

\begin{proposition} 
\label{modification of metric} 
If $E$ is $k$-ample, then there is a Hermitian metric on $\zeta$ whose curvature form has at most $k$ nonpositive eigenvalues at each point of $\mathbb P(E^*)$.
\end{proposition}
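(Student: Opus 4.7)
The plan is to rescale and (if necessary) perturb the Fubini--Study metric pulled back by the morphism that $\zeta^d$ determines, using Sommese's smoothing lemma to promote a generic rank bound to a pointwise eigenvalue count.

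The starting point is the hypothesis of $k$-ampleness itself: choose $d$ so that $\zeta^d$ is spanned and the fibers of the evaluation map $\nu_d:(\zeta^d)^*\to \Gamma(\mathbb P(E^*),\zeta^d)^*$ have dimension at most $k$. The global sections of $\zeta^d$ then determine a holomorphic morphism $\phi:\mathbb P(E^*)\to \mathbb P(\Gamma(\zeta^d)^*)$, and I would take $h_d$ to be the pullback by $\phi$ of the Fubini--Study metric on $\mathcal O(1)$. Its Chern curvature is the semi-positive $(1,1)$-form $\phi^*\omega_{FS}$, whose kernel at a point $p$ coincides with $\ker d\phi_p$. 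Setting $h:=h_d^{1/d}$ produces a smooth Hermitian metric on $\zeta$ whose curvature equals $\tfrac{1}{d}\phi^*\omega_{FS}$ and therefore has the same signature everywhere.

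Next I would control that signature. Because $\nu_d$ is $\mathbb C^*$-equivariant with weight one, the fibers of $\phi$ are precisely the projections of the $\mathbb C^*$-saturations of the $\nu_d$-fibers; a short count shows they again have dimension at most $k$. By the fiber-dimension theorem, on a Zariski-dense open set $U\subset \mathbb P(E^*)$ the kernel of $d\phi$ has dimension at most $k$, so the curvature of $h$ has at most $k$ zero (hence nonpositive) eigenvalues at every point of $U$.

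The main obstacle is that on the complement of $U$ the rank of $d\phi$ can drop further, even when the set-theoretic fiber dimension does not: the map $(x,y)\mapsto (x,y^2)$ already exhibits this behaviour along $\{y=0\}$. At such bad points $\phi^*\omega_{FS}$ may have strictly more than $k$ zero eigenvalues, so the raw pullback metric is insufficient. At this stage I would invoke Sommese's smoothing lemma (Lemma~1.1 of \cite{So2}) to modify $h$ by adding $i\partial\bar\partial u$ for a carefully chosen smooth $u$ supported near the rank-drop locus; the cost of this modification is that up to $k$ eigenvalues in the degenerate directions may become (slightly) negative rather than zero, but no new directions of nonpositivity are introduced elsewhere. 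The resulting Hermitian metric on $\zeta$ has at most $k$ nonpositive eigenvalues of its curvature form at every point of $\mathbb P(E^*)$, which is the desired conclusion.
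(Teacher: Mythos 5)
Your argument is correct and follows essentially the same route as the paper, which offers no proof of its own but simply cites Sommese's smoothing lemma (Lemma 1.1 of \cite{So2}) as implying Corollary 1.3 of \cite{So2}. Your reconstruction --- pulling back the Fubini--Study metric via the morphism defined by the spanned power $\zeta^d$, extracting a $d$-th root to get a metric on $\zeta$, noting that the generic kernel of the curvature has dimension at most $k$ by the fiber-dimension bound, and then invoking the smoothing lemma to control the rank-drop locus --- is precisely the standard derivation of that corollary, so the key step rests on the same lemma the paper quotes.
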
  

Given a  Hermitian metric $h_{\zeta}$ on $\zeta$ and the induced metric $h_{\zeta^*}$ on $\zeta^*$, the followings are equivalent.
\begin{enumerate}
\item 
The curvature form $\Theta_{\zeta}(e)$ has  at most $k$ nonpositive  eigenvalues for any $(x,[f])  \in \mathbb {P}(E ^*)$ and for any $e \in \zeta_{E, (x,[f])}$.
 \item
The curvature form $\Theta_{\zeta^*}(\eta)$ has  at most $k$ nonnegative  eigenvalues  for any $(x,[f])  \in \mathbb {P}(E ^*)$ and for any $\eta \in \zeta^*_{E, (x,[f])}$.
\item 
The Levi form of the   norm function of $\zeta^*$ has   at most $k$ nonpositive  eigenvalues.\\
\end{enumerate}

Under the assumption that $E$ is spanned, define a function $\varphi_{E^*}$ on $E^*$ by pulling back the norm function of $\zeta^* $ via the isomorphism $\zeta^*  \backslash Z(\zeta^* ) \simeq E^* \backslash Z(E^*)$, i.e.,
 $\varphi_{E^*} (f ) = h_{\zeta^* , (x, [f])} (f,f) $
 where $x \in X, f \in E^*_x$. Then $\varphi_{E^*}$ satisfies the property
 $$\varphi_{E^*}(\lambda f) = |\lambda|^2\, \varphi_{E^*}(f), \quad \lambda \in \mathbb C, f \in E_x^* $$ 
 and   the Levi form of  $\varphi_{E^*}$   has the same   signature as  the Levi form of the norm function of $\zeta^*$. 
It is important to emphasize that the function $\varphi_{E^*}$ is not the norm function associated to a Hermitian metric on the vector bundle $E^*$ 
but behaves like a norm function. \\

A {\it norm-like}  function on a vector bundle $W$ is   a continuous nonnegative real valued function $\varphi$ on $W$ such that
 \begin{enumerate}
 \item $\varphi(\lambda w) = |\lambda|^2 \varphi( w)$ for any $\lambda \in \mathbb C$ and for any $w \in W$;
 \item $\varphi$ is $C^{\infty}$ on $W \setminus X$. 
\end{enumerate}

With this definition    Proposition \ref{modification of metric} can be stated as follows: 
  
  \begin{proposition} \label{smoothing}
  If $E$ is $k$-ample, then  $E^*$ is $(k+1)$-convex in the sense that there is a {\it norm-like} function on $E^*$ whose Levi form has at most $k$ non-positive eigenvalues at each point of $E^* \setminus X$.
\end{proposition}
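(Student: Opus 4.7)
The statement is essentially a reformulation of Proposition~\ref{modification of metric} combined with the three equivalences listed just above the definition of norm-like functions and with the explicit construction of $\varphi_{E^*}$. My plan is to assemble these pieces in order.

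First I would invoke Proposition~\ref{modification of metric}: $k$-ampleness of $E$ supplies a Hermitian metric $h_\zeta$ on the tautological line bundle $\zeta$ over $\mathbb{P}(E^*)$ whose curvature form $\Theta_\zeta$ has at most $k$ nonpositive eigenvalues at every point of $\mathbb{P}(E^*)$. This is the only nontrivial input; from here on the argument is a translation.

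Next I would convert this curvature condition to a Levi-form condition on the total space of $\zeta^*$. This is exactly the chain $(1)\Leftrightarrow(2)\Leftrightarrow(3)$ stated in the text: passing from $\zeta$ to its dual flips the sign of the curvature form, giving $(1)\Leftrightarrow(2)$, while the standard identity relating the curvature of a Hermitian line bundle $L$ to the Levi form of the squared-norm on the total space of $L^*$ (verified in a local holomorphic frame by writing the norm as $|v|^2 e^{-\varphi}$ and computing $\partial\bar{\partial}$) yields $(2)\Leftrightarrow(3)$. Applying these equivalences, we obtain that the Levi form of the squared-norm function $\|\cdot\|^2_{\zeta^*}$ has at most $k$ nonpositive eigenvalues on $\zeta^* \setminus Z(\zeta^*)$.

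I would then transfer this to $E^*$ using the biholomorphism $\zeta^*\setminus Z(\zeta^*) \simeq E^*\setminus Z(E^*)$ recalled in the text just preceding the statement, obtained by blowing down the zero section. Pulling back $\|\cdot\|^2_{\zeta^*}$ through this isomorphism and extending by zero along the zero section of $E^*$ yields the function $\varphi_{E^*}$ as defined in the text. The homogeneity $\varphi_{E^*}(\lambda f)=|\lambda|^2 \varphi_{E^*}(f)$ follows because the identification is $\mathbb{C}^*$-equivariant with respect to fiberwise scaling and the norm is quadratic on each fiber of $\zeta^*$; smoothness away from $X$ is immediate from the construction; continuity at the zero section follows from the homogeneity and local boundedness of $h_{\zeta^*}$. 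So $\varphi_{E^*}$ is norm-like in the sense defined.

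Finally, since $\varphi_{E^*}$ is the pull-back of $\|\cdot\|^2_{\zeta^*}$ through a biholomorphism of $E^*\setminus X$ with $\zeta^*\setminus Z(\zeta^*)$, its Levi form has the same signature at every point of $E^*\setminus X$ as that of $\|\cdot\|^2_{\zeta^*}$, hence at most $k$ nonpositive eigenvalues, which is precisely the claimed $(k+1)$-convexity. The only step that requires genuine verification is the curvature-Levi form identity invoked in the second paragraph; in particular one must check that the equivalence holds for the full Levi form on the total space (including mixed horizontal and fiber directions), not merely for its restriction to fibers. This is done by a routine local-frame computation, and is the only real content beyond Sommese's smoothing lemma that the statement depends on.
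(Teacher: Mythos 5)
Your proposal is correct and follows essentially the same route as the paper: the paper presents Proposition~\ref{smoothing} as a direct restatement of Proposition~\ref{modification of metric} (Sommese's smoothing lemma) via the listed equivalences $(1)\Leftrightarrow(2)\Leftrightarrow(3)$ and the construction of $\varphi_{E^*}$ by pulling back the norm function of $\zeta^*$ through the blow-down isomorphism $\zeta^*\setminus Z(\zeta^*)\simeq E^*\setminus Z(E^*)$. Your added attention to checking the curvature--Levi-form identity on the full tangent space of the total space, and the continuity of $\varphi_{E^*}$ along the zero section, only makes explicit what the paper leaves as routine.
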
 

Note that $0$-ample means that the   function $\varphi_{E^*} $ is strictly plurisubharmonic
and therefore the $0$-section $E^*$ can be blown down to a point, i.e., $E$ is ample in
the sense of Grauert (\cite{G}).\\

More generally,  for a holomorphic vector bundle $W$ on a complex manifold $X$, one says that $W$ is $(k+1)$-{\it convex} (respectively, $(k+1)$-{\it concave}) if there exists a norm-like function $\varphi$ on $W$ whose Levi form   has at most $k$ nonpositive (respectively, nonnegative) eigenvalues at each point of $W \setminus X$.  
If, furthermore, the real Hessian of $\varphi$ on $W_x \backslash 0_x$ is everywhere positive definite, then $W$ is said to be {\it strongly} $(k+1)$-{\it convex} 
(respectively, $(k+1)$-{\it concave}). \\

If $E^*$ is equipped with a Hermitian metric, then there is an induced Hermitian metric on $E$.  With these Hermitian  metrics, a direct calculation shows that
the Levi form of the norm function of $E^*$ has at most $k$ non-positive eigenvalues if and only if the Levi form of the norm function of $E$ has at most $k+ r$ non-negative eigenvalues, where $r$ is the rank of $E$.
Sommese's main result (Proposition 1.3 of \cite{So1}) states that the analogous result holds for norm-like function on $E^*$.

\begin{theorem}  \label{Sommese}  
Let $E$ be a holomorphic vector bundle of rank $r$ which is spanned in the sense that
some power of $\zeta $ is spanned, and suppose that $E^*$ is equipped with a
norm-like function.  Then $E$ is equipped with an associated norm-like function
and 
$$
E^*\ \text{is} \  (k+1)\text{-convex} \ \Longrightarrow \ E \ \text{is strongly} \ (k+1+r)\text{-concave}\,.
$$ 
\end{theorem}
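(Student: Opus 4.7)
The plan is to construct a norm-like function $\psi$ on $E$ by dualizing $\varphi$ fiberwise, and then to analyze its Levi form block-by-block using a tangent space splitting. The guiding heuristic is the Hermitian case: if $\varphi = h^{*}(f,f)$ is induced from a Hermitian metric $h$ on $E$, then the Levi forms of the two norm functions differ precisely by a sign flip on the horizontal block (a manifestation of the curvature duality $\Theta_{E^*} = -\Theta_{E}^{t}$), while both carry an $r$-dimensional positive vertical block forced by $|\lambda|^2$-homogeneity. The content of the theorem is that this sign-flip-plus-offset behavior persists for general norm-like functions.

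Concretely, I would define
\[
\psi(v) := \sup_{f \in E_x^{*} \setminus \{0\}} \frac{|f(v)|^2}{\varphi(f)}, \qquad v \in E_x,
\]
which is manifestly $|\lambda|^2$-homogeneous, continuous, and vanishes exactly on the zero section of $E$ (using that $\varphi > 0$ off the zero section of $E^*$, which follows from positive definiteness of its vertical Levi form block). To upgrade to $C^{\infty}$ smoothness on $E \setminus X$ I would invoke Sommese's smoothing lemma (Lemma 1.1 of \cite{So2}) or replace the supremum by a smooth substitute such as $\bigl(\int |f(v)|^{2m}/\varphi(f)^{m}\,d\mu(f)\bigr)^{1/m}$ that retains the homogeneity and whose Levi form has the same signature as the sup in the limit $m \to \infty$. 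Fix $v \in E_x \setminus 0$ and let $f_v$ be the extremizer; choose an auxiliary Hermitian metric on $E$ and use its Chern connection to split $T_{(x,v)} E \cong T_x X \oplus E_x$ into horizontal and vertical parts, and analogously split $T_{(x, f_v)} E^{*}$. The vertical block of the Levi form of $\psi$ on $E_x$ is positive definite by the homogeneity together with strict positivity of $\varphi$ on the dual fiber, contributing $r$ positive eigenvalues; the horizontal block equals the negative of the horizontal Levi form of $\varphi$ at $f_v$, the variation-of-extremizer correction terms vanishing because $f_v$ is a critical point of $|f(v)|^2/\varphi(f)$.

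Counting eigenvalues: by hypothesis $\varphi$ has at most $k$ non-positive eigenvalues in total, and since its own vertical block already contributes $r$ positive eigenvalues, its horizontal block has at most $k$ non-positive eigenvalues, hence at least $\dim_{\mathbb{C}} X - k$ positive eigenvalues. After the sign flip, the horizontal block of $\psi$ has at least $\dim_{\mathbb{C}} X - k$ negative eigenvalues, and combined with the $r$ positive vertical eigenvalues, the Levi form of $\psi$ has at most $k + r$ non-negative eigenvalues, yielding $(k+1+r)$-concavity. For the strongly concave strengthening, the real Hessian of $\psi$ on $E_x \setminus 0$ is positive definite by $|\lambda|^2$-homogeneity together with strict positivity of $\psi$, which in turn follows from non-degeneracy of the pairing $E \otimes E^{*} \to \mathcal{O}_X$. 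The principal obstacle is the rigorous treatment of smoothness: the naive supremum is only continuous, and the extremizer argument for the horizontal sign flip requires smoothness and non-degeneracy of the vertical Hessian of $\varphi$ at $f_v$; making this precise through Sommese's smoothing followed by a limiting argument that preserves the signature counts is the technical heart of the proof.
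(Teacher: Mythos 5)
First, note that the paper does not actually prove this statement: Theorem \ref{Sommese} is quoted verbatim from Sommese (Proposition 1.3 of \cite{So1}), with only the heuristic remark that in the Hermitian case a direct calculation relates the two norm functions. So you are not reconstructing an argument the paper contains; you are reconstructing Sommese's own proof, and your reconstruction has real gaps.

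The most serious one is the envelope step. For $\psi(v)=\sup_{f}|f(v)|^{2}/\varphi(f)$, criticality of $f_v$ kills the first-order correction (the gradient of the sup equals the partial gradient at the extremizer), but it does \emph{not} kill the second-order correction: the complex Hessian of a parametrized supremum at a nondegenerate maximizer acquires the Schur-complement term $\bigl(\partial_{v}\bar\partial_{f}g\bigr)\bigl(-\partial_{f}\bar\partial_{f}g\bigr)^{-1}\bigl(\partial_{f}\bar\partial_{v}g\bigr)$, which is positive semidefinite and is added to the partial Hessian. Your claim that ``the variation-of-extremizer correction terms vanish'' is therefore false, and since this correction pushes eigenvalues upward it works directly against the bound of at most $k+r$ non-negative eigenvalues you are trying to establish; you would have to show the corrected horizontal block still has at least $\dim_{\mathbb C}X-k$ negative eigenvalues, which does not follow from the sign-flip heuristic. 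The second gap is smoothness and signature preservation: the supremum is not $C^{\infty}$ where the maximizer is non-unique (which happens whenever $\varphi$ restricted to a dual fiber is not strictly convex), Sommese's smoothing lemma as quoted in the paper concerns Hermitian metrics on $\zeta$ over $\mathbb P(E^{*})$ and does not apply to this fiberwise dual construction, and the proposed $L^{m}$-approximation with $m\to\infty$ does not obviously preserve the count of strictly negative eigenvalues (signature is only semicontinuous, and in the unfavorable direction). Finally, the ``strongly'' concave conclusion --- positive definiteness of the real Hessian of $\psi$ on $E_{x}\setminus 0_{x}$ --- is asserted from homogeneity and non-degeneracy of the pairing, but $|\lambda|^{2}$-homogeneity of a positive function forces positivity only in the radial complex direction, not of the full fiberwise real Hessian. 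These are exactly the points where Sommese's actual argument (working on the projectivizations and the tautological line bundles rather than with a fiberwise dual norm) does nontrivial work, so the honest course here is to do what the paper does and cite \cite{So1}, or else to carry out the Hessian-with-correction computation in full.
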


In the ampleness language this states the Levi form of the Sommese function on $E$ has
at most $(r+a(E))$ non-negative eigenvalues and, since its restriction  to a fiber $E_x$ is
strictly plurisubharmonic,    this Levi form always has at least $r$ positive eigenvalues.  Thus, as one would
hope, this is set up so that $a(E)=0$, or ampleness in the sense of Grauert, corresponds to the Levi form being maximally
pseudoconcave.\\

\subsection {From the normal bundle to the manifold}
Now let $X$ be a compact submanifold of a complex manifold $Y$ and $E$ be its (holomorphic) normal bundle.  As above
let $r:=rank(E)$.  Here we employ a special case of Fritzsche's Theorem \cite{F} to show
that near $X$ the manifold $Y$ inherits the Levi geometry of $E$.   For this let us say that $Y$ is  $(r+k+1)$-concave
near $X$ if there is an open neighborhood $U$ of $X$ and a smooth function $\rho :U\to \mathbb{R}^{\ge 0}$ with minimum $\{\rho =0\}=X$ and $d\rho \not=0$ otherwise
so that for $\varepsilon >0$ sufficiently small $U_\varepsilon =\{\rho<\varepsilon \}$ is a relatively compact neighborhood of $X$
with $\rho $ being $(r+k+1)$-concave on $U_\varepsilon \setminus X$. In other words, its Levi form of $\rho $ has at most $(r+k)$ non-negative eigenvalues.
In particular, at every point $p$ of every tubular neighborhood $U_\varepsilon $ there is a supporting manifold $F:\Delta \to c\ell(U_\varepsilon)$
where $F$ is a biholomorphic map of a $k$-dimensional polydisk with $F(0)=p$ and $F(\Delta \setminus \{0\})$ contained in $U_\varepsilon$.
As a consequence, it is possible to transfer concavity from the normal bundle space to the manifold:
\begin {theorem}  \cite{F} If the normal bundle $E$ is strongly $(r+k+1)$-concave, then $Y$ is $(r+k+1)$-concave near $X$.
\end {theorem}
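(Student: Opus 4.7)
The plan is to pull back the norm-like function $\varphi$ from $E$ to an exhaustion $\rho$ defined on a tubular neighborhood of $X$ in $Y$, via a smooth (not necessarily holomorphic) diffeomorphism, and then to argue that the signature of the Levi form survives the transfer because the error is controlled by the distance to $X$ while the dominant terms are stabilized by the strong concavity hypothesis.

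First I would fix a Hermitian metric on $Y$ and use the associated exponential map, composed with an orthogonal splitting of $TY|_X$, to produce a smooth diffeomorphism $\Phi$ from an open neighborhood $V$ of the zero section in $E$ onto an open neighborhood $U$ of $X$ in $Y$. This $\Phi$ restricts to the identity on $X$, and its differential along $X$ is $\mathbb{C}$-linear and realizes the canonical identification of $E$ with the holomorphic normal bundle of $X$ in $Y$. Setting $\rho := \varphi \circ \Phi^{-1}$ gives a continuous function on $U$ which is smooth off $X$, vanishes precisely on $X$, and has nonvanishing differential off $X$ (inherited from $\varphi$). The required sublevel sets $U_\varepsilon = \{\rho < \varepsilon\}$ are relatively compact for small $\varepsilon$ because $X$ is compact.

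The crux is to compare Levi forms. Along $X$ itself, the differential $d\Phi$ is $\mathbb{C}$-linear, so at any $p \in X$ the Levi form of $\rho$ agrees, under the identifications, with the Levi form of $\varphi$ at the corresponding point of the zero section. Off $X$, the non-holomorphicity of $\Phi$ contributes correction terms to $L(\rho)$ of the form (smooth tensor vanishing on $X$) $\times$ (derivatives of $\varphi$). The scaling identity $\varphi(\lambda w) = |\lambda|^2 \varphi(w)$ implies that the \emph{signature} of $L(\varphi)$ is constant along each $\mathbb{C}^*$-orbit in $E \setminus X$, so the eigenvalue count descends to the compact projectivization $\mathbb{P}(E)$. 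Combining this with the strong concavity hypothesis (positive-definite real Hessian on each fiber), after normalizing $\varphi$ to a unit sphere bundle for an auxiliary metric on $E$, one obtains a \emph{uniform} positive lower bound on the absolute values of the eigenvalues of $L(\varphi)$ at points where the signature matters.

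Finally, a rescaling argument converts this uniform bound into the desired estimate for $\rho$: by shrinking the tubular neighborhood (i.e., choosing $\varepsilon$ small), the non-holomorphic correction terms become negligible relative to the already uniformly bounded-below eigenvalues of $L(\varphi)$, so $L(\rho)$ retains at most $r+k$ non-negative eigenvalues throughout $U_\varepsilon \setminus X$. The main obstacle is precisely this last step, since the magnitude of $L(\varphi)$ blows up near the zero section (forced by homogeneity of degree two) while the error terms are merely bounded in a naive sense; the strong concavity assumption is the essential ingredient that, after the $\mathbb{C}^*$-rescaling reduction to $\mathbb{P}(E)$, makes the dominant part of $L(\varphi)$ genuinely dominate the perturbation uniformly on the tubular neighborhood.
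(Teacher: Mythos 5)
First, note that the paper itself gives no proof of this statement: it is quoted verbatim from Fritzsche \cite{F}, so there is no internal argument to compare against. Judged on its own terms, your proposal has a genuine gap at exactly the step you flag as "the main obstacle," and the difficulty is in fact the opposite of the one you describe. Write local coordinates $(z,\zeta)$ on $E$ (base, fiber) and use the homogeneity $\varphi(z,\lambda\zeta)=|\lambda|^2\varphi(z,\zeta)$. Then $\varphi_{\zeta\bar\zeta}=O(1)$, $\varphi_{z\bar\zeta}=O(|\zeta|)$ and $\varphi_{z\bar z}=O(|\zeta|^2)$; nothing blows up near the zero section. Since strong concavity makes the fiber block $\varphi_{\zeta\bar\zeta}$ positive definite, \emph{all} of the required negative eigenvalues of $L(\varphi)$ live in the Schur complement $\varphi_{z\bar z}-\varphi_{z\bar\zeta}\varphi_{\zeta\bar\zeta}^{-1}\varphi_{\bar z\zeta}$, which is $O(|\zeta|^2)$: measured against a fixed background metric, the negative eigenvalues vanish \emph{quadratically} as you approach $X$. (Your claimed "uniform positive lower bound on the absolute values of the eigenvalues" is therefore false for the eigenvalues that matter; the $\mathbb{C}^*$-invariance of the signature does not give invariance of eigenvalues, because the rescaling $\zeta\mapsto\lambda\zeta$ multiplies $L(\varphi)$ by $|\lambda|^2$ but distorts a fixed metric anisotropically.) On the other side, a merely smooth tubular-neighborhood map $\Phi$ that is $\mathbb{C}$-linear only along $X$ has $\bar\partial\Phi^{-1}=O(d)$, so the correction to $L(\rho)$ contains the term $\varphi_{\zeta}\cdot\partial\bar\partial(\Phi^{-1})^{\zeta}=O(d)\cdot O(1)=O(d)$ sitting in the base--base block. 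An $O(d)$ perturbation swamps negative eigenvalues of size $O(d^2)$ no matter how small you take $\varepsilon$, so the signature does not survive the transfer and the argument collapses.

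The repair, which is essentially what Fritzsche does, is to abandon the single global smooth identification. Since the first-order neighborhood of $X$ in $Y$ is canonically that of the zero section in $E$, one can cover $X$ by finitely many charts admitting \emph{local biholomorphic} identifications $\Psi_i$ of $Y$ with $E$ near $X$; the local pullbacks $\rho_i=\varphi\circ\Psi_i$ then have exactly the Levi signature of $\varphi$, and any two differ by $O(d^3)$ because $\Psi_i\circ\Psi_j^{-1}$ fixes the first-order neighborhood, so $\Psi_i\circ\Psi_j^{-1}-\mathrm{id}=O(d^2)$ while $d\varphi=O(d)$. Patching with a partition of unity pulled back from $X$ produces errors that respect the anisotropic block structure, namely $O(d^3)$ in the base--base block, $O(d^2)$ in the mixed block and $O(d)$ in the fiber--fiber block, and these \emph{are} dominated by the main term: the fiber block stays positive definite and the Schur complement is perturbed only by $O(d^3)$ against negative eigenvalues bounded below in modulus by $c\,d^2$ (uniformly, by compactness of the sphere bundle of $E$). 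The strong concavity hypothesis is exactly what anchors the fiber block and legitimizes the Schur-complement (inertia) count; your proposal invokes it only vaguely. So the architecture of your argument (pull back, compare Levi forms, shrink) is the right shape, but the single smooth diffeomorphism must be replaced by patched local biholomorphisms, and the eigenvalue bookkeeping must be done anisotropically block by block rather than with a uniform lower bound.
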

Applying Proposition \ref{smoothing} and Theorem \ref{Sommese} yields the following basic fact.
\begin {corollary}
If $E$ is the normal bundle of the compact complex submanifold $X$ in $Y$ and $E$ is spanned, then $Y$ is $(r+a(E)+1)$-concave near $X$.
\end {corollary}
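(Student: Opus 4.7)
The plan is simply to chain together the three results assembled in this section, taking $k := a(E)$. Since $E$ is spanned, in particular $\zeta$ itself is spanned, so Proposition \ref{smoothing} (which requires $E$ to be $a(E)$-ample, true by definition of $a(E)$) produces a norm-like function $\varphi_{E^*}$ on $E^*$ whose Levi form has at most $a(E)$ nonpositive eigenvalues at every point of $E^* \setminus X$. In the language of this section, $E^*$ is $(a(E)+1)$-convex.

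Next I would feed $\varphi_{E^*}$ into Sommese's duality statement, Theorem \ref{Sommese}. Its hypotheses are precisely what the previous step has just provided: a norm-like function on $E^*$ witnessing $(k+1)$-convexity for $k = a(E)$, together with the spanning assumption. The conclusion furnishes an associated norm-like function on $E$ whose Levi form has at most $a(E)+r$ nonnegative eigenvalues on $E \setminus X$, and whose real Hessian is positive definite on each punctured fiber. Thus $E$ is strongly $(r + a(E) + 1)$-concave.

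Finally I would invoke Fritzsche's theorem, quoted above from \cite{F}, which transfers strong concavity of the normal bundle to a tubular neighborhood of the submanifold. The output is an open neighborhood $U$ of $X$ in $Y$ and a smooth function $\rho : U \to \mathbb{R}^{\ge 0}$ with $\{\rho = 0\} = X$, $d\rho \neq 0$ off $X$, and relatively compact sublevel sets $U_\varepsilon = \{\rho < \varepsilon\}$ on which the Levi form of $\rho$ has at most $r + a(E)$ nonnegative eigenvalues off $X$. This is exactly the definition of $Y$ being $(r + a(E) + 1)$-concave near $X$.

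I do not expect any genuine obstacle: the proof is essentially a bookkeeping concatenation. The only items worth double-checking are that the spanning hypothesis on $E$ is strong enough to trigger both the ampleness-based Proposition \ref{smoothing} and Sommese's Theorem \ref{Sommese} (it is, since spanning of $E$ implies spanning of $\zeta$, and hence of every positive power of $\zeta$), and that the index shifts line up correctly — the $k$ of Proposition \ref{smoothing} becomes the $k$ of Theorem \ref{Sommese}, which then contributes the additive $+r$ in the concavity index that matches the input to Fritzsche's theorem, producing the claimed value $r+a(E)+1$.
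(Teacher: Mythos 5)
Your proposal is correct and follows exactly the route the paper intends: the corollary is obtained by chaining Proposition \ref{smoothing}, Theorem \ref{Sommese}, and Fritzsche's theorem with $k=a(E)$, which is precisely the paper's (one-line) justification. The only thing you add is the explicit bookkeeping of the index shifts, which the paper leaves implicit.
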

For example, in the case where $\zeta $ is ample in the classical sense, i.e., $a(E)=0$, then this means that the supporting manifolds
have the dimension of the base $X$, i.e., the maximal possible dimension.   In the other extreme where $a(E)=dim_\mathbb{C}X$, the statement is empty.
The following is a more detailed formulation in the case where $a(E)<dim_{\mathbb{C}}X$
\begin {corollary}\label{concavity}
If $a(E)< dim_{\mathbb{C}}X$, then there is an exhaustion function $\rho $ such that (viewed from the side of $X$) the Levi form at every point 
of the boundaries of the sublevel sets $\{\rho < \varepsilon \}$
has at least $dim_{\mathbb{C}}X-a(E)$ negative eigenvalues.
\end {corollary}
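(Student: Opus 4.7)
The plan is to invoke the immediately preceding corollary and re-express its conclusion in terms of negative, rather than non-negative, eigenvalues. That corollary supplies a tubular neighborhood $U$ of $X$ in $Y$ and a smooth function $\rho : U \to \mathbb{R}^{\geq 0}$ with $\{\rho = 0\} = X$, with $d\rho \neq 0$ elsewhere, and with $\{\rho < \varepsilon\}$ relatively compact for small $\varepsilon$, such that the Levi form $L(\rho)$ has at most $r + a(E)$ non-negative eigenvalues at every point of $U \setminus X$, where $r = \operatorname{rank}(E)$. Nothing new needs to be constructed; only the signature data needs to be repackaged.

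The core step is then a one-line eigenvalue count. Since $\dim_{\mathbb{C}} Y = \dim_{\mathbb{C}} X + r$, the Hermitian form $L(\rho)_p$ acts on a complex vector space of dimension $\dim_{\mathbb{C}} X + r$, so the bound ``at most $r + a(E)$ non-negative eigenvalues'' forces at least $(\dim_{\mathbb{C}} X + r) - (r + a(E)) = \dim_{\mathbb{C}} X - a(E)$ strictly negative eigenvalues at each point of $U \setminus X$, and in particular at each boundary point of a sublevel set $\{\rho < \varepsilon\}$ lying inside $U$. The hypothesis $a(E) < \dim_{\mathbb{C}} X$ is precisely what makes this lower bound positive, so the statement carries nontrivial content exactly in that regime. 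The orientation clause ``viewed from the side of $X$'' merely records that the sublevel sets are tubular neighborhoods of $X$, so that the negative eigenvalues on $\{\rho = \varepsilon\}$ exhibit concavity toward $X$ in the sense of Andreotti.

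There is essentially no further obstacle: all of the analytic substance has already been carried out in the preceding paragraphs --- Sommese's construction of a norm-like function on $E^*$ (Proposition \ref{smoothing}), the rank-and-sign-flipping passage to a norm-like function on $E$ itself (Theorem \ref{Sommese}), and Fritzsche's transfer of concavity from the total space of the normal bundle to a tubular neighborhood of $X$ in $Y$. The present corollary is their bookkeeping specialization, isolating the eigenvalue count in precisely the form required for the Andreotti finiteness argument applied to $H^0(D,\mathcal{S})$ later on.
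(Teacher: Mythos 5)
Your proposal is correct and follows exactly the route the paper intends: the paper offers no separate proof of Corollary \ref{concavity}, presenting it as ``a more detailed formulation'' of the preceding corollary, and your eigenvalue count $(\dim_{\mathbb{C}}X+r)-(r+a(E))=\dim_{\mathbb{C}}X-a(E)$ is precisely the implicit translation from ``at most $r+a(E)$ non-negative eigenvalues'' to ``at least $\dim_{\mathbb{C}}X-a(E)$ negative eigenvalues.'' Nothing is missing.
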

In a more general setting where there is a positive, but not determined, number of negative eigenvalues, we simply say that $X$ has a \emph{smoothly 
defined pseudoconcave neighborhood basis} in $Y$.

\section  {Ampleness of homogeneous bundles}
In the case where $X=C$ is a cycle in a flag domain $D$, we may regard the manifold $Y$ in the above notation to
be the ambient $G$-flag manifold $Z$.  For our understanding of the Levi geometry of $D$, at least near $X$,  it is certainly
of interest to be able to compute the ampleness of the normal bundle $E$ of $C$ in $Z$.   Since $C$ is a $K$-orbit in
$Z$, this bundle is $K$-homogeneous bundle.   Snow ($\S2$ of \cite{Sn1} and Theorem 8.3 of \cite{Sn2}) provides a precise
formula for the ampleness $k$ of any homogeneous bundle over any flag manifold:
$$
a(E)=k=dim_{\mathbb{C}}X-ind(-\wedge_{ext}(E_0))\,,
$$
where $E_0$ is the neutral fiber which is the defining $P$-module of $E$ and $\wedge_{ext}(E_0)$ is a certain combinatorial invariant of this
representation.  Since this representation has been computed in the case where $X=C$ is the base cycle in a flag domain (see Part 4 in \cite{FHW}),
at least in theory one can apply Snow's formula, compute the ampleness $a(E)$ of the normal bundle of $C$ and apply the results sketched above
to obtain neighborhood bases of $C$ with a precise degree of pseudoconcavity.  It is the main purpose of this note to point out this possibility and
to give an exact description of the situation when $a(E)=dim_{\mathbb{C}}C$ is maximal.  For this reason, we explain here the essential geometric
aspects of Snow's Theorem.\\

Consider the general situation of a complex semisimple Lie group $G$ and a homogeneous $G$-vector bundle $E=G\times_{P}E_0$ over a flag manifold
$X=G/P$ which is defined by a representation of $P$ on the neutral fiber $E_0$.  Let $\pi:E\to X$ denote the bundle projection with $x_0$ the base point
in $X$ with $G$-isotropy group $G_{x_0}=:P$ and $E_0=\pi^{-1}(x_0)$ over the base point $x_0\in X$.  Denote by $B$ a Borel subgroup in $P$
and fix a maximal torus $T$ in $B$. In the usual root theoretic setup we regard $B$ as the \emph{negative} Borel whose unipotent radical $U=U^{-}$
has Lie algebra $\mathfrak{u^-}=\oplus \mathfrak{g}_\alpha$ where $\alpha $ runs over the negative roots.  Denote by $V$ the $G$-representation
space of sections of $E$.  Throughout we assume that $E$ is spanned in the sense that the evaluation map $V\to E_x$, $s\mapsto s(x)$, is surjective.
As noted above normal bundles of cycles are always spanned. \\

Since $E$ is spanned, the ampleness $a(E)$ is the
maximum of the dimensions of $F_\eta=\nu^{-1}(\eta)$ as $\eta $ runs over all points in $Im(\nu)$(Corollary \ref{spanned}).   Therefore, semi-continuity of fiber dimension
implies that the set $Im(\nu)_{a(E)}$ of points in $Im(\nu)$ with fiber-dimension $a(E)$ is also closed. Consequently  there exists a
$B$-weight vector $\eta_{-\mu}$ with fiber $F:=\nu^{-1}(\eta_{-\mu})$ of dimension $a(E)$, i.e., $\eta_{-\mu}$ is $U$-fixed and
is of weight $-\mu $ in the sense that $t(\eta)=\chi_{-\mu }(t)\eta_{-\mu} $ for $t\in T$ where
$\chi_{-\mu} $ is the character associated to the weight $-\mu \in \mathfrak{t}^*$ where $\mathfrak{t}$ is the Lie algebra of $T$.\\

Now, every irreducible component of $F$ intersects every fiber of $E^*$ in at most one point.  Thus, since $F$ is $U$-invariant, the restriction
of the bundle projection $\pi $ to such a component is bijective
onto a closed $U$-invariant algebraic set in $X$.  Recall that  $U$ has only finitely many orbits in $X$, each of which is a $B$-orbit and referred to as
a $B$-Schubert cell, we have the following observation.
\begin {proposition}
The bundle projection $\pi: E^*\to X$ maps every irreducible component of  $F$ bijectively and $U$-equivariantly onto a $B$-Schubert cell $S$ in $X$.
\end {proposition}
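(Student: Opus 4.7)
The plan is to analyze an arbitrary irreducible component $F_1$ of $F=\nu^{-1}(\eta_{-\mu})$ in three steps: show $F_1$ is compact and $U$-invariant, identify $\pi(F_1)$ via the Bruhat decomposition, and then read off bijectivity and equivariance from the setup.

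First I would verify invariance and compactness. Since $\eta_{-\mu}\in V^*$ is $U$-fixed, $U$ acts on $F$ and permutes its finitely many irreducible components; a continuous action of a connected group on a finite set is trivial, so $U$ stabilizes $F_1$. From the Remmert-reduction discussion preceding Corollary \ref{spanned}, every connected component of a $\nu$-fiber is a maximal compact analytic subset of $E^*$; hence $F_1$ sits inside a compact connected component and is itself compact.

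Second I would identify the image. Since $F_1$ is compact and irreducible and $\pi$ is continuous and $G$-equivariant, $\pi(F_1)$ is a closed, irreducible, $U$-invariant analytic subvariety of $X=G/P$. The Bruhat decomposition writes $X$ as a finite disjoint union of $B$-orbits, namely the Schubert cells, each locally closed with closure a Schubert variety. Consequently every closed, irreducible, $U$-invariant analytic subvariety of $X$ is the closure $\overline{S}$ of a unique Schubert cell $S$, and this is the cell named in the proposition.

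Finally, the injectivity of $\pi|_{F_1}$ is recalled explicitly in the paragraph preceding the statement---every irreducible component of $F$ meets a fiber of $E^*$ in at most one point---and the $U$-equivariance is inherited from $\pi$; combining these with the identification of the image yields the conclusion. The main obstacle is the classification step: checking that closed irreducible $U$-invariant subvarieties of a flag manifold are exactly Schubert varieties. This is the only place where the specific root-theoretic structure of $G/P$ really enters, and while standard, it carries the genuine content; the rest of the argument is formal, resting on the compactness of $F_1$ that comes from the Remmert-reduction structure of $\nu$-fibers.
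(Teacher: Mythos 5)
Your proposal is correct and follows essentially the same route as the paper, whose argument (given in the paragraph preceding the statement) likewise combines the at-most-one-point intersection with each fiber of $E^*$, the $U$-invariance of $F$, and the finiteness of the $U$-orbit (Bruhat) decomposition of $X$; you merely fill in the details the paper leaves implicit (compactness via the Remmert reduction, triviality of the action of a connected group on the set of components). Your observation that the image is really the closure of a Schubert cell, i.e.\ a Schubert variety, matches the paper's own later usage of the symbol $S$.
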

Now $x_0$ is the unique $B$-fixed point in every Schubert variety $S$, in fact in $X$.  Therefore, every irreducible component
of $F$ contains a unique $U$-fixed point which is the
unique point of its intersection with $E_0^*$.  Since $\eta $ is a $T$-weight vector of weight $-\mu $, the following is immediate.
\begin {proposition}
Given an irreducible component of $F$, its unique point $f_{-\mu} $ of intersection with $E_0^*$  is a $T$-weight vector
in the fixed point space $(E_0)^U$ of weight $-\mu $.
 \end {proposition}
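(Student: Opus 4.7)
The plan is to exploit the $B$-semi-invariance of $\eta_{-\mu}$ together with the structure of $F'$ established in the immediately preceding proposition (bijection onto a $B$-Schubert cell $S$ with unique $B$-fixed point $x_0$).

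First I would use the $U$-part of $B$ to pin down $f_{-\mu}$. Since $\nu$ is $G$-equivariant and $U$ fixes $\eta_{-\mu}$, the variety $F=\nu^{-1}(\eta_{-\mu})$ is $U$-invariant, and because $U$ is connected it fixes (setwise) each irreducible component; in particular it preserves $F'$. Since $U\subset P$ fixes $x_0$, the fiber $E_0^*$ is $U$-invariant, so $U$ acts on $F'\cap E_0^*$. By the preceding proposition this intersection is the single point $f_{-\mu}$, hence $f_{-\mu}$ is $U$-fixed.

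For the weight claim one has to be a bit careful, and this is the only real issue: $T$ does not preserve $F$ as a subset of $E^*$, because for $t\in T$ and $f\in F$ one computes $\nu(t\cdot f)=t\cdot\eta_{-\mu}=\chi_{-\mu}(t)\,\eta_{-\mu}$, which lives in a different $\nu$-fiber. The fix is to combine the $T$-action with the natural $\mathbb{C}^*$-action of scaling on the fibers of $E^*$: define
$$
\psi_t(f) \;:=\; \chi_{-\mu}(t)^{-1}\cdot (t\cdot f), \qquad t\in T,\ f\in E^*.
$$
Since $\nu$ is linear on each fiber and $G$-equivariant, $\psi_t$ is a genuine $T$-action preserving $F$. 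By connectedness of $T$ it preserves the irreducible component $F'$, and since $T\subset P$ fixes $x_0$ it preserves $E_0^*$; therefore $\psi_t$ fixes the unique point $F'\cap E_0^*=\{f_{-\mu}\}$. Unwinding the definition gives $t\cdot f_{-\mu}=\chi_{-\mu}(t)\,f_{-\mu}$, so $f_{-\mu}$ is a $T$-weight vector of weight $-\mu$.

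Combining these two observations shows that $f_{-\mu}$ lies in the $-\mu$ weight subspace of the $U$-fixed part of $E_0^*$, which is precisely the claim. The conceptual content is essentially bookkeeping around Borel-fixed points of $\nu$-fibers; the only nontrivial step is realizing that one must twist the $T$-action by $\chi_{-\mu}^{-1}$ to land back in $F$.
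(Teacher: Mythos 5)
Your argument is correct, and it fills in details that the paper compresses into a single sentence. The $U$-fixedness part is essentially the paper's argument in a slightly different dress: the paper projects the component to a Schubert variety and invokes the fact that $x_0$ is its unique $B$-fixed point, whereas you act with the connected group $U$ directly on the one-point set $F'\cap E_0^*$; these are two ways of saying the same thing. Where you genuinely diverge is the weight claim. The paper's ``the following is immediate'' is most naturally completed by using that $E$ is spanned, so that $\nu$ restricted to the fiber $E_0^*$ is \emph{injective}; then the one-line computation $\nu(t\,f_{-\mu})=t\,\eta_{-\mu}=\chi_{-\mu}(t)\,\eta_{-\mu}=\nu(\chi_{-\mu}(t)\,f_{-\mu})$ forces $t\,f_{-\mu}=\chi_{-\mu}(t)\,f_{-\mu}$ with no further input (this is exactly the computation the authors display later for the point $z$ over $\omega(x_0)$). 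Your twisted action $\psi_t=\chi_{-\mu}(t)^{-1}(t\cdot{})$ is a correct and somewhat slicker alternative: it is a bona fide $T$-action because the fiberwise scaling commutes with the linear $G$-action, it preserves $F$ by equivariance and fiberwise linearity of $\nu$, hence preserves $F'$ by connectedness of $T$, and it fixes the unique point of $F'\cap E_0^*$. What your route buys is that it needs only the single-point intersection (a geometric fact about the component), not injectivity of $\nu$ on the fiber; what it costs is the extra verification that $\psi$ is an action preserving $F'$. Either way the conclusion is the same, and your proof is complete.
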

Now let $F^0$ be an irreducible component of $F$ of top dimension so that $a(E)=dim_\mathbb{C}F^0$.
Note that if $\hat{E}^*$ is the pull-back of $E^*$ to $\hat{X}=G/B$, then $a(\hat{E})=a(E)+dim_\mathbb{C} P-dim_\mathbb{C} B$.  Thus, in order to simplify the notation,
we may assume that $P=B$.  In this way if $\omega \in W$ is the unique element in the Weyl group of the torus $T$ with $\omega x_0$ the
unique $T$-fixed point in the open $U$-orbit in $S$, then
$$
a(E)=dim_\mathbb{C}F^0=\ell(\omega)
$$
where $\ell(\omega)$ denotes the length of $\omega $ in $W$.\\

If $z$ is the unique point of the intersection of $F^0$ with $E^*_{\omega(x_0)}$, then for $t\in T$ it follows that
$$
t\nu (z)=t\nu(f_{-\mu})=\nu (t(f_\mu))=\nu(\chi_{-\mu}(t)f_\mu)=\chi_{-\mu}(t)\nu(z)\,.
$$
Thus $\omega^{-1}(z)=:f_{-\omega^{-1}\mu}$ is a $T$-weight vector of weight $-\omega^{-1}\mu $ in $E_0^*$.
In particular, $-\mu \in \Lambda ((E_0^*)^U)\cap \omega \Lambda(E_0^*)$, where the set of $T$-weights in a $T$-vector space
$M$ is denoted by $\Lambda (M)$.  Letting $W_0:=\{\omega \in W: \Lambda ((E_0^*)^U)\cap \omega \Lambda(E_0^*)\not=\emptyset\}$,
Snow's Theorem (\cite{Sn1, Sn2}) can be stated as follows.
\begin {theorem} \cite{Sn1}
$a(E)=max_{\omega\in W_0}\{\ell(\omega)\}+dim_{\mathbb{C}}B-dim_{\mathbb{C}}P$
\end {theorem}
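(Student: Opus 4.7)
The plan is to follow Snow \cite{Sn1,Sn2}, reducing to the case $P=B$ and then establishing matching upper and lower bounds for $a(E)$. Pulling $E$ back under the projection $\pi: G/B \to G/P$ gives $\hat E := \pi^* E$; since $\pi$ has fiber $P/B$ of dimension $\dim P - \dim B$ along which $\hat E$ is trivial, every $\nu$-fiber of $\hat E^*$ is a $P/B$-bundle over the corresponding $\nu$-fiber of $E^*$, yielding $a(\hat E) = a(E) + \dim P - \dim B$. The $B$-module $E_0$ is unchanged by pullback, so $\Lambda(E_0^*)$, $\Lambda((E_0^*)^U)$, and $W_0$ are the same for $\hat E$ as for $E$. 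It therefore suffices to prove $a(E) = \max_{\omega\in W_0}\ell(\omega)$ when $P=B$, which I assume henceforth.

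The upper bound $a(E) \le \max_{W_0}\ell(\omega)$ is essentially contained in the discussion preceding the theorem: semi-continuity of fiber dimension supplies a $B$-weight vector $\eta_{-\mu}$ with $F = \nu^{-1}(\eta_{-\mu})$ realizing $\dim F = a(E)$; a top-dimensional irreducible component $F^0$ of $F$ projects bijectively and $U$-equivariantly onto a Schubert variety $\overline{U\omega x_0}$ of dimension $\ell(\omega)$; and the two extremal points of $F^0$, namely the unique $U$-fixed point (sitting over $x_0$ as a $U$-fixed weight vector of weight $-\mu$ in $E_0^*$) and the unique intersection with $E^*_{\omega x_0}$ (of the form $\omega$ applied to a weight vector of weight $-\omega^{-1}\mu$ in $E_0^*$), together show $-\mu \in \Lambda((E_0^*)^U) \cap \omega \Lambda(E_0^*)$, so $\omega \in W_0$. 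For the reverse inequality $a(E) \ge \ell(\omega)$ for each $\omega \in W_0$, given $\omega \in W_0$ and $-\mu$ in the intersection, I will construct a compact irreducible subvariety of $E^*$ of dimension $\ell(\omega)$ contained in a single $\nu$-fiber. Pick $\eta \in (E_0^*)^U$ of weight $-\mu$ and set $\xi := \nu(\eta) \in V^*$, a $U$-fixed $T$-weight vector. The fiber $F_\xi := \nu^{-1}(\xi)$ is then $U$-invariant, $\pi(F_\xi)$ is a union of Schubert cells, and once we verify $\omega x_0 \in \pi(F_\xi)$, the closure in $F_\xi$ of the $U$-orbit of the unique preimage of $\omega x_0$ is an irreducible compact subvariety of $E^*$ of dimension $\ell(\omega)$.

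The main obstacle is to show $\xi \in \nu(E^*_{\omega x_0})$. A weight vector $f \in E_0^*$ of weight $-\omega^{-1}\mu$ (existing since $-\omega^{-1}\mu \in \Lambda(E_0^*)$) produces $\omega f \in E^*_{\omega x_0}$ with $\nu(\omega f)$ a nonzero $T$-weight vector of weight $-\mu$ in $V^*$; the task is to match $\nu(\omega f)$ with $\xi$ up to a nonzero scalar. This is the heart of Snow's weight-theoretic analysis: within each irreducible $G$-summand of $V^*$, the weight space at a weight in the Weyl orbit of the extremal weight is one-dimensional, and its canonical $U$-fixed generator coincides, up to nonzero scalar, with the evaluation of an appropriate $T$-weight vector from $E^*_{\omega x_0}$ precisely when $\omega \in W_0$. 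Granting this matching, the lower bound follows, and combined with the upper bound gives $a(E) = \max_{W_0}\ell(\omega)$ in the Borel case; undoing the initial reduction then yields the stated formula.
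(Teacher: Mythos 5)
Your proposal follows essentially the same route as the paper: reduce to $P=B$ via the pullback identity $a(\hat E)=a(E)+\dim_{\mathbb{C}}P-\dim_{\mathbb{C}}B$, get the upper bound from a top-dimensional component of a maximal $\nu$-fiber producing an explicit element of $W_0$ with $a(E)=\ell(\omega_{F^0})$, and get the lower bound by locating, for each $\omega\in W_0$, a $U$-orbit of dimension $\ell(\omega)$ inside the $\nu$-fiber through the $U$-fixed weight vector of weight $-\mu$. The one step you single out as the main obstacle --- that $\nu(\omega f_{-\omega^{-1}\mu})$ agrees up to a nonzero scalar with $\nu(f_{-\mu})$ --- is precisely the step the paper asserts without argument (``It follows that the weight vectors $f_{-\hat\mu}$ and $\omega f_{-\omega^{-1}\hat\mu}$ are both in $\hat F$''), so your write-up is, if anything, more explicit about where Snow's weight-theoretic input is actually needed.
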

\begin {proof}
As indicated above, it is only necessary to give the proof for $B=P$ where it was shown that
$a(E)=\ell(\omega_{F^0})=dim_\mathbb{C}F$ for an explicitly constructed $\omega_{F^0}\in W_0$.
Now let $\omega $ be an arbitrary element of $W_0$, $-\hat{\mu}$ be
the corresponding weight and $\hat{F}:=\nu^{-1}(\nu(f_{-\hat{\mu}}))$ be the  associated $\nu $-fiber through the
$U$-fixed point $f_{-\hat{\mu}}$.  Let $f_{\omega^{-1}\hat{\mu}}$ be the weight vector the existence of which is
guaranteed by $\omega \in W_0$.  It follows that the weight vectors $f_{-\hat{\mu}}$ and $\omega f_{-\omega^{-1}\hat{\mu}}$ are both
in $\hat{F}$.  Since $\hat{F}$ is bijectively mapped by $\pi $ to a Schubert variety $\hat{S}$ and $\ell(\omega)$
is the dimension of some $U$-orbit in $\hat{S}$, it follows that
$$
\ell(\omega)\le dim_\mathbb{C}\hat {F}\le dim_\mathbb{C}F=\ell(\omega_{F^0})\,.
$$
\end {proof}
\section {Normal bundles of cycles in flag domains}
Our application of Snow's Theorem takes place in the situation where the reductive group at hand is the complexification $K$ of a
maximal compact subgroup $K_0$ of the real form $G_0$ which is acting on a $G$-flag manifold $Z=G/Q$.  The vector bundle
$E$ is the normal bundle of a closed $K$-orbit $X$, i.e., of the base cycle $X=C$ in some flag domain $D$.   The parabolic
group $Q$ is defined to be the $G$-isotropy group at some base point in $X$.  Therefore the normal bundle $E$ is defined by the
$K\cap Q$-representation on the quotient space $E_0=\mathfrak{s}/((\mathfrak{q}+\theta \mathfrak{q})\cap \mathfrak{s})$ where $\theta $ is the (complex linear) involution which defines
the Cartan decomposition $\mathfrak{g}=\mathfrak{k}+\mathfrak{s}$.  Thus as a $K\cap Q$-representation, $E_0^*$ is the restriction of its representation
on $\mathfrak{s}^*$ which is defined by the coadjoint representation of $K$.    If $G_0$ is not of Hermitian
type, then $K$ is semisimple.  If  $G_0$ is of Hermitian type, $K$ is reductive with a $1$-dimensional center.  Note that Snow's theorem is valid in the slightly more
general case where $G$ is reductive.  Thus we may apply it to the $K$-homogeneous conormal bundle $E^*$ over the closed $K$-orbit $X$.  We do so
in the simplest case where $a(E)=dim_{\mathbb{C}}X$.\\

In order to state our result we recall that if a flag domain $D$ in $Z=G/Q$ possesses non-constant holomorphic functions, then there is
a canonically defined quotient $Z=G/Q\to G/P=Z_{red}$, where $P$ is the stabilizer in $G$ of the base cycle $C$, so the induced
quotient $D=G_0/L_0\to G_0/K_0=D_{red}$ is the holomorphic reduction of $D$ onto a Hermitian symmetric space $D_{red}$ of non-compact
type which is embedded into its compact dual $Z_{red}$.  The bundle $D\to D_{red}$ is trivial with $C$ as fiber over the $K_0$-fixed point
in $D_{red}$.  In particular, the normal bundle $E$ of $C$ is trivial.  In this situation we simply say that $D$ fibers over a Hermitian symmetric
space (see \cite{H1} for other equivalent conditions).\\

Applying Snow's Theorem we prove the following classification theorem in the simplest case where $a(E)$ is maximal.
\begin{proposition} \label{classification}
If $C$ is a closed $K$-orbit in $Z$ with normal bundle $E$ in a flag domain $D$, then $a(E)=dim_\mathbb{C}C$ if and only if
$E$ is trivial and $D$  fibers over a Hermitian symmetric domain.
\end{proposition}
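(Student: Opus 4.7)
The plan is to prove the two directions of the equivalence separately, with the ``only if'' direction being substantially deeper.

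For the ``if'' direction, suppose $E$ is trivial. On the compact connected complex manifold $C$, every holomorphic function is constant, so $V := \Gamma(C,E) \cong \mathbb{C}^r$ consists of the constant sections, where $r = \mathrm{rank}(E)$. The evaluation map $\nu \colon E^* \to V^*$ then identifies with the second projection $C \times (\mathbb{C}^r)^* \to (\mathbb{C}^r)^*$, so every fiber has dimension $\dim_\mathbb{C} C$. Since the normal bundle of any cycle is spanned, Corollary \ref{spanned} yields $a(E) = \dim_\mathbb{C} C$. (The auxiliary hypothesis that $D$ fibers over a Hermitian symmetric domain is not needed here; it appears in the statement because by the first proposition of the introduction it is equivalent to triviality of $E$.)

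For the ``only if'' direction, assume $a(E) = \dim_\mathbb{C} C$. The plan is to first deduce from Snow's theorem that $E$ is trivial, and then invoke \cite{H1} to conclude that $D$ fibers over a Hermitian symmetric domain. Applying Snow's formula $a(E) = \max_{\omega \in W_0} \ell(\omega) + \dim_\mathbb{C} B - \dim_\mathbb{C}(K \cap Q)$ together with $\dim_\mathbb{C} C = \dim_\mathbb{C} K - \dim_\mathbb{C}(K \cap Q)$, the hypothesis forces $\max_{\omega \in W_0} \ell(\omega) = \ell(\omega_0)$, the length of the longest element $\omega_0$ of the Weyl group of $K$. Hence $\omega_0 \in W_0$: there is a weight $-\mu \in \Lambda((E_0^*)^U)$ for which $-\omega_0 \mu$ is also a weight of $E_0^*$. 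By the geometric content of Snow's theorem worked out in the proof above, this produces a top-dimensional irreducible component $F^0$ of a $\nu$-fiber for the pullback $\hat{E}^* \to K/B$ which maps bijectively and $U$-equivariantly onto the unique top-dimensional Schubert variety in $K/B$, namely all of $K/B$. Since pushforward along the flag bundle $K/B \to C$ preserves global sections and the nowhere-vanishing condition (because $\pi_* \mathcal{O}_{K/B} = \mathcal{O}_C$), one obtains a nowhere-vanishing global holomorphic section of $E^*$ coming from a weight vector in $E_0^*$.

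The next step, and the main obstacle, is to promote this single nonvanishing section to a full trivializing frame of $r$ sections of $E^*$, since Snow's theorem directly guarantees only one top-dimensional $\nu$-fiber whereas triviality requires an $r$-parameter family. Here I would exploit the explicit description $E_0 = \mathfrak{s}/((\mathfrak{q}+\theta\mathfrak{q}) \cap \mathfrak{s})$ as a $(K \cap Q)$-module together with the $K$-homogeneity of $C$: the rigid structure of $\mathfrak{s}$ as a $K$-representation and the $\theta$-symmetry built into the quotient should force the $\omega_0$-matching $-\mu \leftrightarrow -\omega_0 \mu$ to propagate to every irreducible $(K \cap Q)$-constituent of $E_0$, making each such constituent have trivial Levi highest weight. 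Consequently $E_0$ is a trivial $(K \cap Q)$-module, so $E$ is holomorphically trivial. Once this is established, the result from \cite{H1} recalled in the introduction immediately yields $D = D_{SS} \times C$, completing the proof.
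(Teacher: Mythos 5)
Your ``if'' direction is correct, and your first step in the ``only if'' direction --- that $a(E)=\dim_\mathbb{C}C$ forces $\omega_0\in W_0$, i.e.\ the existence of a weight $-\mu\in\Lambda((E_0^*)^U)$ with $-\omega_0\mu$ again a weight of $E_0^*$ --- agrees with the paper. But the heart of your argument, the passage from this to triviality of $E$, is exactly the step you defer to ``should force,'' and the mechanism you propose for it cannot work. You aim to show that $E_0$ is a \emph{trivial} $(K\cap Q)$-module. That is false precisely in the case you need to reach: when $D=D_{SS}\times C$, the neutral fiber is (up to the identifications in the paper) $\mathfrak{s}_+$ or $\mathfrak{s}_-$ restricted to $K\cap Q$, a nontrivial module. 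The bundle $K\times_{K\cap Q}E_0$ is trivial there because $E_0$ extends to a representation of all of $K$, not because $K\cap Q$ acts trivially. So even a successful ``propagation of the $\omega_0$-matching'' would be establishing the wrong statement, and the intermediate claim about a nowhere-vanishing section of $E^*$, while plausible, is not what is needed.

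The paper's proof runs in the opposite logical order and rests on a case distinction you omit. It identifies $\Lambda_{\max}(E_0)$ with highest weights of $\mathfrak{s}$ (non-Hermitian type) or of $\mathfrak{s}_{\pm}$ (Hermitian type). If $\mathfrak{g}_0$ is \emph{not} of Hermitian type, $\mathfrak{s}$ is an irreducible $K$-module and the condition $\omega_0\in W_0$ forces $E_0^*=\mathfrak{s}^*$, contradicting the fact that $E_0^*$ is a proper submodule of $\mathfrak{s}^*$; so this case simply cannot occur, and your argument never detects this. If $\mathfrak{g}_0$ is of Hermitian type, the same weight argument gives $E_0^*\supset\mathfrak{s}_{\pm}^*$, hence $\mathfrak{q}\cap\mathfrak{s}\subset\mathfrak{s}_{\mp}$ and $\mathfrak{q}\subset\mathfrak{k}+\mathfrak{s}_{\mp}$; this produces the $G$-equivariant map $Z\to\widehat Z$ onto a compact Hermitian symmetric space, so $D$ fibers over a Hermitian symmetric domain, and triviality of $E$ then follows from the structure theory of \cite{H1} rather than preceding it. To repair your write-up you would need to add the non-Hermitian contradiction and replace ``$E_0$ is a trivial $(K\cap Q)$-module'' by ``$E_0$ is the restriction of a $K$-module,'' which is the conclusion the containment $E_0^*\supset\mathfrak{s}_{\pm}^*$ actually supports.
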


\begin{proof}
It is enough to assume that $a(E)=dim_\mathbb{C}C$ and show that $D$ fibers over a Hermitian symmetric space.
 If $\frak g_0$ is not of Hermitian type, then $\frak s$ is an irreducible $K$-module and  $\Lambda_{\max}(E_0) = \{\lambda_{\frak s}\}$, where $\lambda_{\frak s}$  is the highest weight of $\frak s $.
 If $\frak g_0$ is of Hermitian type, then $\frak s = \frak s_+ \oplus  \frak s_{-}$ the direct sum of two irreducible $K$-modules $\frak s_+$ and $\frak s_-$,  and  $\Lambda_{\max}(E_0) \subset\{\lambda_+, \lambda_-\}$, where $\lambda_{\pm}$ is the highest weight of $\frak s_{\pm}$. (If $\frak s_+ \not\subset \frak q$ and $\frak s_- \not \subset \frak q$, then $\Lambda_{\max}(E_0) = \{\lambda_+, \lambda_-\}$. If $\frak s_+ \subset \frak q$, then $\Lambda_{\max}(E_0) =\{\lambda_-\}$. If $\frak s_- \subset \frak q$, then
 $\Lambda_{\max}(E_0) = \{\lambda_+\}$.)\\

 Set up the notation as in our above discussion of Snow's Theorem (where $K=G$ in that context).  Since $a(E)=dim_\mathbb{C}C$, there is a weight vector
$f_{-\lambda}$  in $E_0^*$ of weight   $-\lambda \in  \Lambda ({E_0^*}^U)=-\Lambda_{\max}(E_0)$ such that   $ \omega_0 f_{-\lambda} \in E_0^*$
for $\omega_0$ the longest element of the Weyl group $W$.
If $\frak g_0$ is not of Hermitian type, then  $-\omega_0 \lambda$ is the highest  weight of the irreducible representation $\frak s^*$. Thus we have $E_0^*=\frak s^*$, a contradiction to the fact that  $E_0$ is a nontrivial quotient of $\frak s$ (in other words, $E_0^*$ is a proper submodule of $\frak s^*$).\\

 If $\frak g_0$ is of Hermitian type, then $-\lambda$ is the lowest weight of $\frak s_+^*$ (or $\frak s_-^*$) and  $-\omega_0 \lambda$ is the highest  weight of the irreducible representation $\frak s^*_+$ (or $\frak s_-^*$). Thus $E_0^*$ contains either $\frak s_+^*$ or $\frak s_-^*$.
 From $\frak s_+^* = (\frak s/\frak s_-)^*$ and $\frak s_-^* = (\frak  s /\frak s_+)^*$, it follows that $\frak q \cap \frak s$ is contained either in $\frak s_-$ or in $\frak s_+$.
 Then $\frak q$ is contained   either in  $\frak k + \frak s_{-}$ or in $\frak k + \frak s_{+}$, which implies that  there is a $G$-equivariant map from $Z$ to a compact Hermitian symmetric space $\widehat Z$ whose restriction to $D$ is  a   $G_0$-equivariant holomorphic or antiholomorphic map from $D$ to a Hermitian symmetric space of noncompact type  $\widehat{D}=G_0/K_0 \subset \widehat{Z}$.
\end{proof}

Returning to the question of the existence of a pseudoconcave neighborhood basis of $C$, we now apply Corollary \ref{concavity}.
\begin {corollary}\label {neighborhoods}
If $D$ does not fiber over a Hermitian symmetric space, i.e., $D$ is not a product of a Hermitian symmetric space and $C$, then $C$ has a neighborhood 
$U$ in $D$ with a smooth exhaustion $\rho :U\to \mathbb{R}^{\ge 0}$ with minimum along $C$ so that the Levi form $L(\rho)$ has at least $dim_{\mathbb{C}}C-a(E)$ negative eigenvalues where  $a(E)$ is the ampleness of the normal bundle $E$ to $C$ in $D$.

 \end {corollary}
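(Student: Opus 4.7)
The plan is to observe that the corollary is essentially an immediate synthesis of Proposition \ref{classification} with Corollary \ref{concavity}, once one checks that the hypothesis of the latter is met.

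First I would verify the numerical hypothesis $a(E) < \dim_{\mathbb C} C$. Recall from the introduction that $a(E) \le \dim_{\mathbb C} C$ always holds for a spanned homogeneous bundle over a cycle. Proposition \ref{classification} characterizes precisely the equality case: $a(E) = \dim_{\mathbb C} C$ occurs if and only if $E$ is trivial and $D$ fibers over a Hermitian symmetric space. Hence the hypothesis that $D$ is not such a product forces the strict inequality $a(E) < \dim_{\mathbb C} C$.

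Next I would apply Corollary \ref{concavity} with $X = C$ and with the ambient manifold taken to be a sufficiently small open neighborhood of $C$ in $D$ (equivalently in $Z$, since $D$ is open in $Z$ and the normal bundle of $C$ is the same whether computed inside $D$ or inside $Z$). Because $E$ is spanned (the normal bundle of every cycle is spanned, as noted in Section 2) and $a(E) < \dim_{\mathbb C} C$, Corollary \ref{concavity} provides an open neighborhood $U$ of $C$ in $D$ and a smooth exhaustion $\rho : U \to \mathbb{R}^{\ge 0}$ with $\{\rho = 0\} = C$, $d\rho \ne 0$ off of $C$, and with the property that at each point of each sublevel boundary $\{\rho = \varepsilon\}$ the Levi form $L(\rho)$ has at least $\dim_{\mathbb C} C - a(E)$ negative eigenvalues. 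This is exactly the conclusion of the corollary to be proved.

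The real content of the argument is already packaged into the two cited results: Proposition \ref{classification} rules out the degenerate case using Snow's formula, and Corollary \ref{concavity} is the combination of Sommese's Theorem \ref{Sommese} (passing from $k$-ampleness of $E$ to strong $(k+1+r)$-concavity of the norm-like function on $E$) with Fritzsche's theorem (transferring concavity from the normal bundle $E$ to a tubular neighborhood of $C$ in $D$). Because no new estimate is needed — only the assembly of these ingredients — there is no substantive obstacle, and the proof reduces to a short citation. If anything, the only point that deserves explicit mention is that near $C$ the normal-bundle computation inside $Z$ agrees with that inside $D$, so that Fritzsche's transfer legitimately takes place in $D$ and the resulting exhaustion is defined on an open subset of $D$ rather than of $Z$.
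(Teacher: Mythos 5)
Your proposal is correct and follows exactly the route the paper intends: Proposition \ref{classification} rules out $a(E)=\dim_{\mathbb{C}}C$ under the non-fibering hypothesis, and Corollary \ref{concavity} (Sommese plus Fritzsche) then supplies the exhaustion with the stated number of negative Levi eigenvalues. The paper itself offers no further argument beyond this assembly, so your write-up, including the remark that the normal bundle of $C$ in $D$ agrees with that in $Z$, matches and even slightly elaborates the intended proof.
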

\begin {thebibliography} {XXX}
\bibitem [A] {A}
A. Andreotti, {\it Th\'eormes d\'ependence alg\'ebrique sur les espaces complexes pseudo-concaves}, Bull. Soc. Math. France {\bf 91} (1963) 1-38 

\bibitem [AG] {AG}
A. Andreotti and H. Grauert, {\it Th\'eor\`eme de finitude pour la cohomologie des espaces complexes}, (French) Bull. Soc. Math. France  {\bf 90}  1962 193--259.
\bibitem [FHW] {FHW}
G. Fels, A. Huckleberry, and J. Wolf,
{\it Cycle spaces of flag domains. A complex geometric viewpoint. Progress in Mathematics}, {\bf 245}. Birkh\"auser Boston, Inc., Boston, MA, 2006.
\bibitem  [F] {F}
K. Fritzsche, {\it $q$-konvexe Restmengen in kompakten komplexen Mannigfaltigkeiten}, Math. Ann. {\bf 221}, 251--273 (1976) 
\bibitem [G] {G}
H. Grauert, {\it \"Uber Modifikationen und exzeptionelle analytische Mengen}, 
Math. Ann. \textbf{146}, 331-368 (1962)
\bibitem [GGK] {GGK}
M. Green, P. A. Griffiths and M. Kerr, {\it Hodge Theory, Complex Geometry and Representation Theory}, AMS and CBMS, 
Regional Conference Series in Mathematics, {\bf 118} (2013)

\bibitem [Ha] {Ha}
T. Hayama, {\it Cycle connectivity and pseudoconcavity of flag domains}, (arXiv 1501.0178)

\bibitem [HHL] {HHL}
T. Hayama, A. Huckleberry and  Q. Latif,
{\it Pseudoconcavity of flag domains: The method of supporting cycles}
(arXiv: 1711.09333)

\bibitem [Hu] {H1}
A. Huckleberry,
{\it Remarks on homogeneous manifolds satisfying\\
Levi-conditions}, Bollettino U.M.I. (9) {\bf III} (2010) 1-23
(arXiv:1003:5971)

\bibitem [K] {K}
J. Koll\'ar, {\it Neighborhoods of subvarieties in homogeneous spaces.
Hodge theory and classical algebraic geometry}, 91--107, Contemp. Math., 647, Amer.
Math. Soc., Providence, RI, 2015.

\bibitem [L] {L}
Q. Latif,  On the pseudoconcavity of flag domains, Jacobs University Thesis, June 2017

\bibitem [SW] {SW}
W. Schmid and J. A. Wolf,{\it
A vanishing theorem for open orbits on complex flag
manifolds}, Proc. Amer. Math. Soc. {\bf 92} (1984), 461--464.

\bibitem [Sn1] {Sn1}
D. Snow,
{\it On the ampleness of homogeneous vector bundles}, Trans. Amer. Math. Soc. {\bf 294}  (1986),  no. 2, 585--594. 

\bibitem  [Sn2] {Sn2}
D. Snow,
{\it Homogeneous vector bundles}, at https://www3.nd.edu/~snow/ 

\bibitem [So1] {So1}
A. Sommese,
{\it Concavity Theorems}, Math. {\bf 235} (1978) 37--53 

\bibitem [So2] {So2}
A. Sommese,
{\it A convexity theorem}, Singularities / [edited by Peter Orlik]. Proc. Sympos. pure Math., vol. {\bf 40}, part 2, AMS, 1983 497--505 

\bibitem [W] {W}
J. A. Wolf, {\it The action of a real semisimple Lie group on a complex manifold, I; Orbit structure and holomorphic arc components},
Bull. Amer. Math. Soc., {\bf 75} (1969) 1121-1237
\end {thebibliography}
\bigskip

\noindent
\small{Jaehyun Hong and Aeryeong Seo\\
Korean Institute of Advanced Study\\
85 Hoegiro, Dongdaemun-gu.\\ 
Seoul 02455\\ 
Republic of Korea\\

\noindent
Alan Huckleberry\\
Institut f\"ur Mathematik\\
Ruhr Universit\"at Bochum\\
Universit\"atsstrasse 150\\
44780 Bochum, Germany\\
and\\
Jacobs University Bremen\\
Faculty for Mathematics and Logistics\\
Campus Ring 1\\
28759 Bremen, Germany\\

\noindent
jhhong00@kias.re.kr\\ 
ahuck@gmx.de\\ 
aeryeongseo@kias.re.kr}

\end {document}